\documentclass[a4paper]{article}
\usepackage{xcolor}
\usepackage{amsfonts}
\usepackage{amsmath}
\usepackage{amssymb}
\usepackage{graphicx}
\usepackage[dvipsnames]{xcolor}
\usepackage[breaklinks=true]{hyperref}
\usepackage{breakcites}
\usepackage{geometry}%
\providecommand{\U}[1]{\protect\rule{.1in}{.1in}}
\newtheorem{theorem}{Theorem}

\newtheorem{definition}[theorem]{Definition}

\newtheorem{lemma}[theorem]{Lemma}

\newtheorem{proposition}[theorem]{Proposition}
\newtheorem{remark}[theorem]{Remark}

\newenvironment{proof}[1][Proof]{\noindent\textbf{#1.} }{\ \rule{0.5em}{0.5em}}
\newcommand{\R}{\mathbb{R}}
\newcommand{\pa}{\partial}
\newcommand{\na}{\nabla}
\newcommand{\bfm}{\textbf{m}}
\newcommand{\bfM}{\textbf{M}}

\def\Xint#1{\mathchoice
{\XXint\displaystyle\textstyle{#1}}%
{\XXint\textstyle\scriptstyle{#1}}%
{\XXint\scriptstyle\scriptscriptstyle{#1}}%
{\XXint\scriptscriptstyle\scriptscriptstyle{#1}}%
\!\int}
\def\XXint#1#2#3{{\setbox0=\hbox{$#1{#2#3}{\int}$ }
\vcenter{\hbox{$#2#3$ }}\kern-.6\wd0}}

\def\dashint{\Xint-}

\numberwithin{equation}{subsection}
\numberwithin{theorem}{subsection}

\title{A variational approach to the derivation of\\ 
reduced models for bubbly flows} 
\author{Cosmin Burtea\thanks{cosmin.burtea@imj-prg.fr, Université Paris Cité, Sorbonne Université, CNRS, IMJ-PRG, F-75013 Paris, France}, David Gérard-Varet \thanks{david.gerard-varet@imj-prg.fr, Université Paris Cité, Sorbonne Université, CNRS, IMJ-PRG, F-75013 Paris, France}}

\begin{document}

\maketitle

\begin{abstract}
In this paper, we derive reduced models for the motion of gas bubbles in an ambient inviscid liquid, using Hamilton's least action principle. We first explain how to recover from this principle the classical sharp interface model, in which the pressure  is continuous across the surfaces of the bubbles. We then show how to reduce the complexity of the model, by simplifying the description of those surfaces. Namely, we impose them to evolve within a subclass of hypersurfaces described by a finite number of parameters (the simplest example being spheres, that is neglecting deviation of the bubbles from sphericity). The difficulty from a mathematical and modeling point of view is to determine the interface conditions that substitute to pressure continuity. We complete the derivation of the reduced models by some well-posedness analysis, in the case of curl-free liquid flow and homogeneous pressure in the bubbles.

\end{abstract}

\section{Introduction}

\subsection{Context}

\ \ \ \ When waves propagate in liquids, the strongest acoustic feedback is due to
the presence of gas bubbles \cite{leighton}. The sound speed in a bubbly flow
is greatfully impacted by the volume fraction of the bubbles and its minimal
value can be much lower than the sound speeds in either fluid medium. For example,
the Wood sound speed model \cite{Wood1930} predicts that for air-water
mixtures with $c_{1}=330m/s,\rho_{1}=1,24kg/m^{3},$ $c_{2}=1500m/s,$ $\rho
_{2}=1000kg/m^{3}$ the minimal sound speed can be as low as $23m/s$ . This
surprising fact has been validated experimentally \cite{Micaelli1982}.

\medskip

Applications of bubble acoustics range from medicine (ultrasonic scanning)
to military industry (sonars) or oceanography to name just a few. Important
information can be inferred from quantitative and qualitative (size
distribution) estimates of the bubble population : dilation/contraction of
bubbles leads to a modification of the total surfacic area and has impact on
mass and heat transfer \cite{Legendre_et_al2012}. This kind of information can
be used in oceanography for a better understanding of coastal erosion,
exchanges of mass, energy and temperature between the ocean and the
atmosphere, in the petrochemical industry in order to optimize harvest and
transport of petrol/gas, or in cosmetics and pharmaceutical industries where
the control of the bubbles introduced in the fabrication process is crucial
for the lifespan of the final products.

We refer to the remarkable review paper \cite{leighton} where all these
examples and many others are discussed in detail.

\medskip

Bubbly flows belong to the general family of two-phase flows. When seen as
continuous media, molecular differences between the two phases cause them to
deform differently under the action of a given force. Thus the two phases
undergo two different nonlinearly coupled dynamics \cite{risso2018agitation}.
Depending on the scale at which one wants to describe the interaction between
the two phases we distinguish two types of models : sharp interfaces or
diffuse interface models. 

\medskip

In this paper we propose several models of the former type for describing the
motion of gas bubbles in an ambient liquid : we consider the case where the
phases are completely separated by sharp interfaces but where the continuum
hypothesis still holds for each one of the two phases. Since each phase
occupies its own domain, in order to obtain a closed model, \textit{one must
encode the interactions of the phases at the level of the interfaces}.

\medskip

In full generality, the dynamics of the two phases are governed by two Euler or
Navier-Stokes equations (with various degrees of complexity viscous/inviscid,
incompressible/compressible etc.) for the bulk phases that are classically coupled at the
level of the interface  through the equality of normal velocities
and normal stresses. This is a free boundary problem which is heavy to handle
both numerically or analytically. For mathematical results on the well-posedness of this type of equations we refer the reader to \cite{Beale,Tani,Guo-Tice} regarding the Navier-Stokes case, and to the book \cite{Lannes} for the inviscid equations. The specific case of bubbles in a viscous incompressible flow was tackled in \cite{Marcel}. Let us point out that in the case of a single bubble, under radial symmetry of the initial data, the sphericity of the bubble is preserved, as well as the symmetry of the fields, providing a simpler set of equations. We refer to \cite{Biro} for an in-depth analysis of the well-posedness properties of such system. Recently, a nice  stability analysis of this radially symmetric setting was conducted in \cite{Weinstein}. See also \cite{Weinstein2} with an additional periodic forcing, and \cite{Weinstein3} where the introduction of asymmetric deformations of the bubbles is shown to trigger some strong instability. 

\medskip

Still, even in settings where radial symmetry is not strictly preserved, bubbles remain often close to spherical and/or ellipsoidal. This is observed in many applications and for a wide range of Reynolds and E\"{o}tv\"{o}s numbers \cite{ellingsen2001rise,van2005numerical}. In the same spirit, the recent article \cite{JangTice} shows that under the action of gravity, bubbles in hydrostatic equilibrium deviate slightly from sphericity, with a deformation along the vertical axis associated with a one dimensional bifurcation phenomenon. 
Taking these observations as a starting point, a natural question arises: can one derive \textit{reduced models}, in which the surfaces of the bubbles have to  remain spherical, or ellipsoidal, or more generally have to evolve within a subclass described by a finite number of parameters ?

Of course there is no reason to imagine that the bulk
equations should change. The difficulty from a mathematical
and modelling point of view is \textit{to determine the appropriate interface
conditions that govern the reduced dynamics.} The equality
of normal velocities can be understood geometrically as a necessary condition
for the liquid and the gas to maintain a common interface and therefore we
will also encounter it in the reduced models. However the equality of normal
stresses has to be relaxed owing to the imposed shape of the
interface. How to relax this condition and still keep a
well-posed system of PDEs is the focus of the present paper. 

\medskip

Regarding spherical bubbles, simplified models in the spirit of the famous
Rayleigh-Plesset equation were obtained in previous works
\cite{hermans1973},\cite{Smereka}, \newline \cite{Gavrilyuk}. We mention that in these papers the
irrotationality of the velocity field is exploited in order to drastically
simplify the dynamics : the latter is governed by a system of ODEs for the
radii and centers of the gas bubbles. We also refer to our recent contribution \cite{Nous}, where we build weak Leray solutions for a reduced system modeling the interaction between a viscous liquid and spherical bubbles. The conditions at the interfaces there are viscous analogues of some of the conditions that will be derived in this paper. Another reference is \cite{Hillairet1}, in which a PDE system governing the evolution of a finite number of spherical gas bubbles is obtained through a singular limit approach. Namely, the bubbles are modeled by the viscous compressible Navier-Stokes equations and the shear viscosity is sent to infinity. 

\medskip

Our goal here is to go  beyond the case of spherical bubbles and to obtain some new PDE models in the framework of the Hamilton's least action
principle : our models can be seen as the Euler-Lagrange
equations verified by critical points of certain action functionals. The
advantage of this method is that the whole physics is contained in the
definition of a scalar function called the Lagrangian of the system, which is
used to construct the action functional, \textit{and in the class where the
critical point is sought}. As it is the case with finite dimensional systems, invariance of the Lagrangians gives rise to conservation laws \cite{Gouin1976,Serre2018}. The variational approach to derive equations in
fluid mechanics and more general equations of continuous media has a long
history going back to Serrin's work \cite{Serrin1959} and continues to this
day \cite{Gay-BalmazYoshimura2017}. We refer the reader to the (by
far non-exhaustive) list of important contributions
\cite{Eckart1960,Arnold1966,salmon1988hamiltonian,Godunov1978,HolmRatiuMarsden1998,GavrilyukGouin1999,Berdichevsky2009}. For more elementary introductory papers oriented towards modeling, see
\cite{Gavrilyuk2011,BurteaGavrilyukPerrin2024}. The equations governing the
interaction between rigid bodies and a liquid were obtained in
\cite{HouotMunnier,Vankerschaver_et_al2009,GlassSueur2012}. This problems bears some
structural resemblances with the liquid-bubbles interaction, however in the former case the whole dynamics of the solids and of their boundaries is driven by rigid motions and interface conditions are a bit easier to understand. 

\medskip

Although  we do not study this kind of models in this paper, let us say a few words on the other class of models that we
briefly mentioned : diffuse interface models. In bubbly flows or more general
dispersed two phase-flows, the number of interfaces is or becomes too large
for a sharp-interface model to be used for numerical simulations. For
this reason, one attempts to derive models for the "average flow" that
contains macroscopic information about the mixture. In this description, a
single point in space can "accommodate" different fluid particles. New
variables, volume fractions, are introduced in order to quantify
the space occupied by each fluid particle. Macroscopic models are obtained in
the literature either by averaging/homogenization techniques from mesoscopic
models
\cite{drew2006theory,ishii2010thermo,PerrierGutierrez2021,Hillairet2,Bresch_et_al2024}, directly from Hamilton's stationary action principle
\cite{Saurel_et_al2003,Gavrilyuk,GavrilyukSaurel2002} or using methods of statistical physics developed for description of many particle systems \cite{Caflisch,JabinPerthame2000,Gavrilyuk}. Of course, this very
short list of references is incomplete and only reflects the
authors's biased mathematical and modelling culture.

\subsection{Outline of the paper}

Our paper is structured as follows. 

We start in Section \ref{sec_Lag_approach} with the derivation of the classical two-phase flow system: Euler's equations in the bulk phases, coupled through the equality of the normal velocities and of the pressures at the interface(s). We adopt a lagrangian point of view:   we describe the liquid and gas phases with one-parameter families of diffeomorphims $X_\pm=X_\pm(t,x)$ that describe fluid particles trajectories. We first collect in Section \ref{sec_Some_general_consid} some notations and  useful formulae which allow to switch from lagrangian to eulerian formulations and vice-versa.  Then, in Section \ref{section_immiscible}, we recover the Euler free boundary two-fluid system as the Euler-Lagrange equations verified by the critical point of an action functional, that is the difference between the kinetic and potential energies. The action functional is sought in the class of diffeomorphisms that maintain a common interface. To be a bit more precise, if $\mathcal{I}_0$ is the initial liquid-gas interface, the interface separating the two phases at time $t>0$ is given by
\begin{equation}
\label{interface_intro}
\mathcal{I}(t)=X_+(t,\mathcal{I}_0)=X_-(t,\mathcal{I}_0).
\end{equation} 
 Unlike in the bulk phases, perturbations $\delta X_\pm$ cannot be chosen arbitrarily at the level of the interfaces : they should also lead to configurations at time $t>0$ that maintain a common interface 
 \[
(X_++\delta X_+)(t,\mathcal{I}_0)=(X_-+\delta X_-)(t,\mathcal{I}_0).
\]
This is the key observation leading to the condition $p_+=p_-$ verified at the level of the interface.


\medskip
In Section \ref{sec_shape_constrained}, we turn to the core of our work, that is the derivation of simplified models with constrained deformations for the bubbles. The main idea is that those simpler models can be obtained by searching for critical points of the action functional in a narrower class of diffeomorphisms : besides \eqref{interface_intro} we ask  $\mathcal{I}(t)$ to be the union of a finite number of elements drawn from a collection $(\mathcal{S}_m)_{m\in M}$
where for each $m \in M$,  $\mathcal{S}_m$ is the boundary of a smooth bounded domain $\Omega_m$ of $\R^3$. One should think of this collection as the surfaces that result from admissible deformations of a fixed prototype: $M$ is a finite dimensional manifold representing the finite number of degrees of freedom allowed for the deformations of the bubbles.

Accordingly, the class of admissible perturbations $\delta X_\pm$ is much smaller: they need to preserve the constraint that each connected component of $\mathcal{I}(t)$ belongs to  $(\mathcal{S}_m)_{m\in M}$ at each time $t$.  this leads to relaxing the condition  $p_+=p_-$ at the interface. Roughly, this relation is replaced by a finite number of integral conditions  of the form $\int_\mathcal{I}(p_+-p_-)v_nd\sigma=0$, where the fields $v_n = v_n(x)$ form a basis of all possible normal velocity fields associated with deformations of the interfaces within the subclass $(\mathcal{S}_m)_{m\in M}$. All details are provided in Section \ref{sec_formalism}.

For instance, in the simplest case of spherical bubbles, each bubble is described by a four-dimensional set of parameters $m = (c,r)$ where $c = (c_1,c_2,c_3)$ is its center and its radius $r$. Deformations preserving sphericity are described by normal velocities of the form $\dot{c}n+\dot{r}$ where $n$ is the normal at the spherical bubble while $\dot{c},\dot{r}$ can be chosen freely. This leads to exactly four boundary conditions at the level of the boundary of each bubble : $\int_\mathcal{I}(p_+-p_-)nd\sigma=0$ respectively $\int_\mathcal{I}(p_+-p_-)d\sigma=0$. See Section \ref{sec_spherical_bubbles} for more on the special case of spherical bubbles, and Section \ref{sec_ellipsoidal_bubbles} for ellipsoidal bubbles. Two simplified models are further obtained in the case of spherical bubbles in Section \ref{sec_simplified_models}.

Those simple cases are there to illustrate our methodology, but do not exhaust its applications.  More sophisticated frameworks incorporating more complicated physics could be obtained. We speculate that this kind of reduced models could be notably useful for computer simulations. Indeed, a numerical description of bubbles interfaces can only be achieved approximately, that is with a finite number of degrees of freedom, so that reduced models might be natural and computationally less expensive.  

\medskip
Of course, the derivation carried in Section \ref{sec_shape_constrained} is a matter of mathematical modeling. It relies on the choice of an action functional and is about the Euler-Lagrange equations that a critical point of this functional (if there is any) should satisfy. The actual existence of such critical points or of solutions to the reduced models is a separated mathematical problem. In the last Section \ref{sec_WP_reduced}  of the present study we briefly touch upon this question for the case of irrotational velocity fields for the liquid and space homogeneous pressures for the gaseous phase. We show that this leads to a well-posed set of ODEs that generalizes the one obtained in \cite{Smereka,Gavrilyuk} for the spherical case.


\medskip

\section{Lagrangian approach for immiscible fluids}\label{sec_Lag_approach}
In this section, we explain how to derive formally the classical equations for immiscible flows. We focus on the case where one phase is compressible (gaseous bubbles) and one incompressible (the liquid in which the bubbles are immersed).  The point of view that we adopt here is lagrangian: we derive Euler-Lagrange equations from a least action principle, and then convert them into the usual eulerian formulation. In the next section, this principle will be modified to account for additional shape constraints on the interfaces.

\subsection{Some general considerations}\label{sec_Some_general_consid}
We first introduce several general notions, related to the lagrangian description of continuous deformable media.  

\begin{definition}
A continuous deformable medium of the initial configuration $\Omega_{0}$ is a
pair $\left(  \Omega_{0},X\right)  $ such that

\begin{itemize}
\item $\Omega_{0}\subset\mathbb{R}^{d}$ is a given open set which represents
the initial configuration of the medium;

\item $X:[0,\infty)\times\overline{\Omega_{0}}\rightarrow\mathbb{R}^{d}$ is a
continuously differentiable function satisfying

\begin{itemize}
\item for all $x\in\overline{\Omega_{0}}$, $X\left(  0,x\right)  =x$;

\item for all $t\geq0$, $X_{t}=X\left(  t,\cdot\right)  :\overline{\Omega_{0}%
}\rightarrow X\left(  t,\overline{\Omega_{0}}\right)  $ is a diffeomorphism; 
\end{itemize}
\end{itemize}
\end{definition}
Recall that $\Theta:C\rightarrow\mathbb{R}^{d}$ with $C\subset\mathbb{R}^{d}$ a
closed set is a diffeomorphism on its image if it is the restriction of some
$\tilde{\Theta}:O\rightarrow\mathbb{R}^{d}$ which is a diffeomorphism on its image
with $O$ open and $C\subset O$.

The image of $\overline{\Omega_{0}}$ through $X\left(  t,\cdot\right)  $,
$\overline{\Omega_{t}}:=X\left(  t,\overline{\Omega_{0}}\right)  =\left\{
X\left(  t,x\right)  :x\in\overline{\Omega_{0}}\right\}  $ represents the
configuration of the medium at time $t\geq0$. We denote by $\emph{CDM}\left(
\Omega_{0}\right)  $ the set of $X$ such that $(\Omega_0, X)$ is a continuous deformable medium.

\begin{remark}
In several places below, we shall write expressions such as  $X^{-1}$ or $\tilde X \circ X^{-1}$ for some  $X, \tilde X$ as in the definition. These slightly abusive  notations  will refer to invertibility or composition with respect to the space variable only, time being a parameter. For instance, for any $t$ and $x \in \overline{\Omega_{t}}$, $X^{-1}(t,x)$ refers to $X_t^{-1}(x)$, while $\tilde X \circ X^{-1}(t,x)$  refers to $\tilde{X}_t(X_t^{-1}(x))$. 
\end{remark}

\subparagraph{Lagrangian and eulerian standpoints.}

We have two different ways to think of any physical quantity $B$ that
characterizes the deformable medium $X\in\emph{CDM}\left(  \Omega_{0}\right)
$ :

\begin{itemize}
\item the lagrangian point of view: $B^{\ell}$ is attached to the particle
motion. In this setup $B^{\ell}\left(  t,x\right)  $ \textit{represents the
measure at time }$t$\textit{\ of the physical quantity associated to the
particle that leaves }$x$\textit{\ at time }$t=0$. It should be understood
automatically that the domain of definition for a "lagrangian quantity"
$B^{\ell}$ is  $\mathbb{R}_{+}\times\overline
{\Omega_{0}}$;

\item the eulerian point of view: $B^e$ is measured as time evolves, say between
$t_{1},t_{2}$ in a geometrical point $x\in%
{\textstyle\bigcap\nolimits_{t\in\left[  t_{1},t_{2}\right]  }}
X\left(  t,\overline{\Omega_{0}}\right)  $. 
 It should be understood automatically that the domain of definition of $B^{e} $ is $%
{\displaystyle\bigcup_{t\geq0}}
\{t\}\times X\left(  t,\overline{\Omega_{0}}\right)  $;
\end{itemize}

The two points of view are formally equivalent in the sense that any
\textquotedblleft lagrangian physical quantity\textquotedblright\ can be
regarded as an \textquotedblleft eulerian physical quantity\textquotedblright%
\ via the family of diffeomorphisms $\left(  X\left(  t,\cdot\right)  \right)
_{t\geq0}$. More precisely, given $B$ we have
\begin{equation}
B^{e}\left(  t,x\right)  =B^{\ell}\left(  t,X^{-1}\left(  t,x\right)  \right)
\text{ for all }t\geq0\text{ and }x\in X\left(  t,\overline{\Omega_{0}%
}\right)  , \label{Lagrange_to_Euler}%
\end{equation}
and conversely
\begin{equation}
B^{\ell}\left(  t,x\right)  =B^{e}\left(  t,X\left(  t,x\right)  \right)
\text{ for all }t\geq0\text{ and }x\in\overline{\Omega_{0}}.
\label{Euler_to_Lagrange}%
\end{equation}
As explained in \cite{BurteaGavrilyukPerrin2024}  ,
the Lagrangian point of view is the
natural framework to generalize the classical stationary action principle.
However, the resulting Lagrange equations are usually too complicated to handle whereas they are considerably simpler in eulerian coordinates.

Using the conventions established by relations $\left(
\text{\ref{Lagrange_to_Euler}}\right)  $ and $\left(
\text{\ref{Euler_to_Lagrange}}\right)  $ and using the chain rule for
derivatives we have the following relation.
\begin{equation}
\operatorname*{trace}\left(  D\psi^{\ell}(DX)^{-1}\right)  =\left(
(DX)^{-1}\right)  _{ji}\partial_{j}\left(  \psi^{\ell}\right)  ^{i}=\frac
{1}{\det DX}\operatorname{Cof}(DX)_{ij}\partial_{j}\left(  \psi^{\ell}\right)
^{i}=\left(  \operatorname{div}\psi^{e}\right)  ^{\ell}.
\label{div_non_conservativ}%
\end{equation}
Using this relation we infer that for any scalar field $B^{e}$ and for any
vector field $\psi^{e}$ with compact support in $\cup_{t \in \mathbb{R}_+} \{t\} \times \Omega_t$, 
\begin{align*}
\int_{\Omega_{0}}(\nabla B^{e})^{\ell}\cdot\psi^{\ell}\det DX\mathrm{d}x  &
=\int_{\Omega_t}\nabla B^{e}\cdot\psi^{e}\mathrm{d}%
x=-\int_{\Omega_t}B^{e}\operatorname{div}\psi
^{e}\mathrm{d}x\\
&  =-\int_{\Omega_{0}}B^{\ell}(\operatorname{div}\psi^{e})^{\ell}\det
DX\mathrm{d}x\\
&  =-\int_{\Omega_{0}}B^{\ell}\frac{1}{\det DX}\operatorname{Cof}%
(DX)_{ij}\partial_{j}\left(  \psi^{\ell}\right)  ^{i}\det DX\mathrm{d}x\\
&  =\int_{\Omega_{0}}\partial_{j}(\operatorname{Cof}(DX)_{ij}B^{\ell}%
)(\psi^{\ell})^{i}\mathrm{d}x
\end{align*}
From which we obtain that for any $i\in\overline{1,d}$%
\begin{equation} \label{gradient_euler_lagrange}
\frac{1}{\det DX}\partial_{j}\left(  \operatorname{Cof}(DX)_{ij}B^{\ell
}\right)  =\left(  \partial_{i}B^{e}\right)  ^{\ell},
\end{equation}
or in a compact form%
\[
\frac{1}{\det DX}\operatorname{div}\left(  \operatorname{Cof}(DX)B^{\ell
}\right)  =(\nabla B^{e})^{\ell}.
\]
In particular, taking $B=1$ we find the so-called \emph{Piola identity}
\begin{equation}
\operatorname{div}\left(  \operatorname{Cof}(DX)\right)  =0.
\end{equation}

\subsection{Description of immiscible incompressible-compressible
phases} \label{section_immiscible}

We consider $\Omega\subset\mathbb{R}^{d}$ a smooth open set which
represents the fixed domain of an immiscible two-phase flow made of a liquid (symbol : $+$) and a gas (symbol : $-$). We give ourselves $\Omega_{+,0},\Omega_{-,0}$ two smooth disjoint open subsets of $\Omega$, which model the region occupied initially by the liquid and the gas respectively. We assume that the gas bubbles are surrounded by the liquid, which translates into $\Omega = \Omega_{+, 0} \cup \overline{\Omega_{-,0}}$. 

In what follows, most of the quantities that will appear will be lagrangian ones, so that the upperscript $^\ell$ will be omitted to lighten notations. Only the upperscript $^e$ relative to eulerian quantities will be explicit.

We suppose that the liquid is an incompressible fluid with constant density
$\varrho_{+}$. The evolution of the liquid part will be determined by
$X_{+}\in\emph{CDM}\left(  \Omega_{+,0}\right)  $ along with a scalar field
$p_{+}:\Omega_{+,0}\rightarrow\mathbb{R}$ which can be seen as a constraint
that will enforce the incompressibility of the liquid namely $\det
DX_{+}\left(  t,x\right)  =1.$

The gas is assumed to be a compressible barotropic fluid with (lagrangian) variable density
$\varrho_{-}$. \ The evolution of the gas part will be determined by $X_{-}%
\in\emph{CDM}\left(  \Omega_{-,0}\right).$ From the conservation of mass of gas, we deduce  :%
\begin{equation} \label{rho-lagrange}
\varrho_{-}\left(  t,x\right)  =\frac{\varrho_{-,0}\left(  x\right)  }{\det
DX_{-}(t,x)},
\end{equation}
where $\varrho_{-,0}\left(  x\right)  $ is the initial density. We denote $p_{-} = p_-(s),e_{-} = e_-(s)$ the pressure and the  Helmholtz free energy of the gas. They
are known functions of a real variable $s$, and are linked by the relation%
\begin{equation}
p_{-}\left(  s\right)  =s^{2}e_{-}^{\prime}\left(  s\right)  .
\label{presure_energy}%
\end{equation}

We denote $\mathcal{I}(t)$ the interface between the immiscible liquid and gas at time $t$. At $t=0$, 
\[
\mathcal{I}\left(  0\right)  =\partial\Omega_{+,0}\cap\partial\Omega_{-,0} = \partial \Omega_{-,0}%
\]
(we remind that the gas bubbles are surrounded by the liquid one). For $t > 0$, we have 
\begin{equation}
\mathcal{I}\left(  t\right)  =X_{+}\left(  t,\mathcal{I}\left(  0\right)
\right)  =X_{-}\left(  t,\mathcal{I}\left(  0\right)  \right)  .
\label{relation_1}%
\end{equation}

The interpretation is as follows:  the two-phase fluid occupies the initial configuration
$\overline{\Omega_{0}}\subset\mathbb{R}^{n}$, each point $x\in\overline
{\Omega_{0}}\backslash\mathcal{I}\left(  0\right)  $ is occupied either by a
particle of liquid either by a particle of gas, particle which will change
positions as time evolves. At the level of the interface, each spatial
position $x\in\mathcal{I}\left(  0\right)  $ is occupied by two
particles\footnote{Particles should be understood as infinitesimal volumes
(fluid parcel) with caracteristic scale much larger than the mean distance
between two "real" fluid particules but neglectable w.r.t. the characteristic
size of the domain where the mixture evolves.} : one of liquid, the other one of
gas. If $x\in\overline{\Omega_{\pm,0}},$ then $X_{\pm}\left(  t,x\right)  $
represents the position at time $t$ of the particle that was located in $x$ at
time $t=0$. The fields $u_{\pm}\left(  t,x\right)  =\dot{X}_{\pm}\left(  t,x\right)
$ represent the velocities of the particles initially at $x$.

At the level of the interface, it should be noted that in general liquid and gas particles
located in $x\in\mathcal{I}\left(  0\right)  $ at $t=0$ will find themselves
at later time $t>0$ in different positions $X_{+}\left(  t,x\right)
,X_{-}\left(  t,x\right)  \in\mathcal{I}\left(  t\right)  $.
\begin{remark}
    In all what follows, the unit normal vector $n$ at the interface $\mathcal{I}(0)$ will be oriented from $\Omega_{-,0}$ to $\Omega_+,0$, that is from the gas to the liquid (and the same for $n^e_t$ at $\mathcal{I}(t)$). 
    \end{remark}
    We furthermore have the property: for $t > 0$
    \begin{equation} \label{constraint_fixed_bd}
    X_+(t,\pa \Omega) = \pa \Omega
    \end{equation}
    which expresses that liquid particles that are initially at the fixed boundary $\pa \Omega$ remain there at any time. 
\begin{proposition} \label{prop_normal_velocities}
Relation \eqref{relation_1} implies in eulerian coordinates  that
\begin{equation}
u_{+}^{e}(t,\cdot)\cdot n_t^{e}=u_{-}^{e}(t,\cdot)\cdot n_t^{e} = v_{\mathcal{I}}(t,\cdot) \text{ on }\mathcal{I}\left(
t\right)  \label{relation_normale}%
\end{equation}
with  $v_{\mathcal{I}}$ the normal velocity associated to the family  $\left(\mathcal{I}(t)\right)_{t \ge 0}$, see below. Moreover, the first equality reads in lagrangian coordinates: 
\begin{equation}
\dot{X}_{+}\circ X_{+}^{-1}\circ X_{-}\cdot\operatorname{Cof}(DX_{-}%
)n=\dot{X}_{-}\cdot\operatorname{Cof}(DX_{-})n \quad \text{ on }%
\mathcal{I}\left(  0\right). \label{relation_normale_lag}%
\end{equation}
\end{proposition}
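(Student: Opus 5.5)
The plan is to interpret \eqref{relation_1} as saying that $\mathcal{I}(t)$ is a smooth family of hypersurfaces swept out by \emph{two} different parametrizations of $\mathcal{I}(0)$, namely $X_+(t,\cdot)$ and $X_-(t,\cdot)$. Each of them should produce the same normal velocity $v_{\mathcal{I}}$.

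\textbf{Definition of $v_{\mathcal I}$.} Take any $C^1$ family $\gamma(t,\cdot):\mathcal{I}(0)\to\mathcal{I}(t)$ of diffeomorphisms. Set
\[
v_{\mathcal{I}}(t,\gamma(t,y)) := \partial_t\gamma(t,y)\cdot n^e_t(\gamma(t,y)).
\]
This does not depend on the choice of $\gamma$. Indeed, if $\tilde\gamma$ is another such family, write $\tilde\gamma(t,y)=\gamma(t,\phi(t,y))$ with $\phi(t,\cdot)=\gamma_t^{-1}\circ\tilde\gamma_t$ a diffeomorphism of $\mathcal{I}(0)$. Differentiating in $t$ gives
\[
\partial_t\tilde\gamma=\partial_t\gamma(t,\phi)+D\gamma\,\partial_t\phi .
\]
The second term lies in the tangent space $T\mathcal{I}(t)$, because $\partial_t\phi$ is tangent to $\mathcal{I}(0)$. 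It therefore disappears after dotting with $n^e_t$.

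Alternatively, write $\mathcal{I}(t)$ locally as the zero set of a level-set function $\Phi(t,\cdot)$ with $\nabla\Phi\neq0$. Differentiating $\Phi(t,\gamma(t,y))=0$ gives
\[
\partial_t\gamma\cdot n^e_t=-\partial_t\Phi/|\nabla\Phi| ,
\]
where the sign depends on the orientation. This intrinsic expression shows independence directly.

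\textbf{Eulerian statement.} Apply this with $\gamma=X_\pm|_{\mathcal{I}(0)}$. Both are admissible by \eqref{relation_1}, and $X_\pm(t,\cdot)$ restricted to $\mathcal{I}(0)$ is a diffeomorphism onto $\mathcal{I}(t)$ since $X_\pm(t,\cdot)$ is a diffeomorphism of the closure. Since $\partial_t X_\pm(t,y)=u_\pm^e(t,X_\pm(t,y))$ by \eqref{Lagrange_to_Euler}, we get $u_\pm^e\cdot n^e_t=v_{\mathcal{I}}$ on $\mathcal{I}(t)$. This is \eqref{relation_normale}. One small point needs care: $u^e_\pm$ is only defined on the closure of its own phase, so the traces on $\mathcal{I}(t)$ are taken from each side, which is consistent with the definition of CDM on $\overline{\Omega_{\pm,0}}$.

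\textbf{Lagrangian form.} Evaluate the first equality at the point $x=X_-(t,y)$, $y\in\mathcal{I}(0)$. The liquid particle sitting at $x$ started at $X_+^{-1}(X_-(t,y))\in\mathcal{I}(0)$, so
\[
u_+^e(t,x)=\dot X_+\circ X_+^{-1}\circ X_-(t,y), \qquad u_-^e(t,x)=\dot X_-(t,y).
\]
The remaining step is the transport of normals (Nanson's formula): for a diffeomorphism $X_-$ mapping the surface $\mathcal{I}(0)$ onto $\mathcal{I}(t)$,
\[
n^e_t(X_-(t,y)) = \frac{\operatorname{Cof}(DX_-)n(y)}{|\operatorname{Cof}(DX_-)n(y)|}.
\]
To prove it, take a tangent vector $\tau$ to $\mathcal{I}(0)$. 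Then $DX_-\tau$ is tangent to $\mathcal{I}(t)$, and
\[
\operatorname{Cof}(DX_-)n\cdot DX_-\tau=\det(DX_-)\,n\cdot\tau=0,
\]
using $\operatorname{Cof}(A)^T A=\det(A)\, I$. So $\operatorname{Cof}(DX_-)n$ is normal to $\mathcal{I}(t)$. It points in the correct direction, from gas to liquid, because $\det DX_->0$ by continuity from $t=0$, so $X_-$ preserves orientation. Multiplying the eulerian equality by the positive scalar $|\operatorname{Cof}(DX_-)n|$ then yields \eqref{relation_normale_lag}.

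\textbf{Main obstacle.} The main delicate step is mostly bookkeeping. It consists of making rigorous that the restrictions $X_\pm(t,\cdot)|_{\mathcal{I}(0)}$ are $C^1$ in time as maps into the moving surface, and of checking the orientation and sign conventions in the cofactor formula. The analytic content is just the tangency argument above.
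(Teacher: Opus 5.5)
Your argument is correct and is essentially the paper's proof. Like the paper, you define $v_{\mathcal{I}}$ as the normal component of the velocity of any motion staying on the moving interface, use that this is independent of the chosen motion, apply it to $X_\pm(\cdot,x_0)$ with $x_0\in\mathcal{I}(0)$, and convert to lagrangian form through $n^e_t\circ X_- = \operatorname{Cof}(DX_-)n/\|\operatorname{Cof}(DX_-)n\|$. The only difference is that you prove the independence (by reparametrization or a level-set function) and the cofactor formula for the transported normal yourself, whereas the paper takes the first from Pr\"uss and states the second without proof.
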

\begin{proof}
We first remind the definition of the normal velocity $v_{\mathcal{I}}$ associated to $\left(\mathcal{I}(t)\right)_{t \ge 0}$, following \cite{Pruss}. Let $t \ge 0$, $x \in \mathcal{I}(t)$.  Let  $\tau \rightarrow X_{\mathcal{I}}(\tau)$ a smooth map from a neighborhood of $t$ to $\R^3$  such that 
$$ \forall \tau, \: X_{\mathcal{I}}(\tau) \in \mathcal{I}(\tau), \quad X_{\mathcal{I}}(t) = x $$
It can be easily shown that $\dot{X}_{\mathcal{I}}(t) \cdot n^e_t(x)$ depends only on $t$ and $x$, not on the choice of the path $X_{\mathcal{I}}$, see \cite{Pruss} for details, or Section \ref{sec_shape_constrained} below. This allows to define the normal velocity by the formula: 
$$ v_{\mathcal{I}}(t,x) \:  :=  \: \dot{X}_{\mathcal{I}}(t) \cdot n^e_t(x) $$
Now, from identity \eqref{relation_1}, we know that for all $x_0 \in \mathcal{I}(0)$,  $X_\pm(t,x_0) \in \mathcal{I}(t)$ for all $t \ge 0$. For fixed $t$ and $x = X_\pm(t,x_0)$, we can then take $X_{\mathcal{I}}(\tau) = X_\pm(t+\tau,x_0)$ to get
$$ v_{\mathcal{I}}(t,X_\pm(t,x_0)) =  \dot{X}_\pm(t,x_0) \cdot n_t^e(X_\pm(t,x_0)) = u^e_\pm(t,X_\pm(t,x_0)  \cdot n_t^e(X_\pm(t,x_0)) $$
Hence, in eulerian coordinates: for all $t \ge 0$,
$$v_{\mathcal{I}}(t,\cdot) = u^e_\pm(t,\cdot) \cdot n^e_t.  $$
This proves \eqref{relation_normale}. Finally, relation \eqref{relation_normale_lag} is just the translation in lagrangian coordinates of the first equality in \eqref{relation_normale}, remembering that for all $t \ge 0$, $x \in \mathcal{I}(0)$, 
$$ n^e_t(X_-(t,x)) = \frac{\left(DX^{-1}_-(t,X_-(t,x)\right)^T n(x)}{\|\left(DX^{-1}_-(t,X_-(t,x)\right)^T n(x)\|} = \frac{\operatorname{Cof}(DX_{-}(t,x)) n(x)}{\|\operatorname{Cof}(DX_{-}(t,x)) n(x)\|} 
$$

\end{proof}


\begin{remark}
Similarly, one can deduce from \eqref{constraint_fixed_bd} that $u^e_+ \cdot n\vert_{\partial \Omega} = 0$  in eulerian coordinates, or equivalently that $\dot{X}_{+} \cdot\operatorname{Cof}(DX_{+})n = 0  \text{ on }\partial \Omega$ in lagrangian coordinates. This follows from replacing $\mathcal{I}(0)$ by $\partial \Omega$, $X_-$ by $X_+$ and  $X_+$  by $\mathrm{Id}$ in Proposition \ref{prop_normal_velocities}. 
\end{remark}

\begin{remark}
\label{Remark_var_norm}By the same line of reasoning as in Proposition \ref{prop_normal_velocities}, if we are given two
one-parameter families $X_{\pm}^{\mu}$ depending in a differential manner of the parameter $\mu$, 
and such that
\[
\mathcal{I}^{\mu}\left(  t\right)  :=X_{+}^{\mu}\left(  t,\mathcal{I}(0)\right)  =X_{-}^{\mu}\left(  t,\mathcal{I}(0)\right)
\]
we have 
\begin{equation}
\frac{\partial X_{-}^{\mu}}{\partial\mu}\cdot\operatorname{Cof}(DX_{-}^{\mu
})n =\frac{\partial X_{+}^{\mu}}{\partial\mu}\circ (X^\mu_{+})^{-1}\circ
X^\mu_{-}\cdot\operatorname{Cof}(DX_{-}^{\mu})n\text{ on }\mathcal{I}(0). 
\label{var_norm_eq}%
\end{equation}
Similarly, if $X_{+}^{\mu}(\pa \Omega) = \pa \Omega$, we find that 
\begin{equation}
   \frac{\partial X_{+}^{\mu}}{\partial\mu} \cdot\operatorname{Cof}(DX^\mu_{+})n = 0  \text{ on }\partial \Omega.
\label{var_norm_eq_2}%
\end{equation}

\end{remark}

\medskip
Now that we have recovered the equalities \eqref{relation_normale} and \eqref{relation_normale_lag} on the normal velocities at the interface, we turn to the formal derivation of the remaining equations for our two-phase flow. They can be deduced from a least action principle. We define the action functional associated to the flow by
\begin{align}
&  \mathcal{A}\left(  t_{0},t_{1},X_{+},p_{+},X_{-}\right)  \nonumber\\
&  :=\int_{t_{0}}^{t_{1}}\int_{\Omega_{+,0}}\varrho_{+}\frac{|\dot{X}_{+}%
|^{2}}{2}+p_{+}\left(  \det DX_{+}-1\right)  +\int_{t_{0}}^{t_{1}}\int
_{\Omega_{-,0}}\varrho_{-,0}\frac{|\dot{X}_{-}|^{2}}{2} - \varrho_{-,0}%
e_{-}\left(  \frac{\varrho_{-,0}}{\det DX_{-}}\right)
.\label{action_two_fluids}%
\end{align}
with $X_\pm \in CDM(\Omega_{\pm,0})$ and $p_+ : \Omega_{+,0} \rightarrow \mathbb{R}$. We shall obtain the equations verified by $X_{+},p_{+},X_{-}$ by asking that they
are critical points of $\mathcal{A}\left(  t_{0},t_{1},X_{+},p_{+}%
,X_{-}\right)  $ for any $t_{0}$ and $t_{1}$. More precisely, the evolution of
$X_{+},p_{+},X_{-}$ will be characterised by the Euler-Lagrange equations associated to $\mathcal{A}$, restricted to the class of diffeomorphisms $X_\pm$ that obey
relation $\left(  \text{\ref{relation_1}}\right)  $, that preserve the external boundary $\partial \Omega$, and that have prescribed initial and final conditions. Concretely, we consider a one-parameter family  $\left(  X^\mu_{+},p^\mu_{+},X^\mu_{-}\right)$, $\mu \in (-\mu_0, \mu_0)$ for some $\mu_0 > 0$, with the following properties: for all $\mu$,    
\begin{align}
& X_{+}^{\mu}\left(  t,\mathcal{I}\left(  0\right)  \right)     =X_{-}^{\mu
}\left(  t,\mathcal{I}\left(  0\right)  \right)  ,\label{constraint}\\
& X_{\pm}^{\mu}\left(t,\partial\Omega \right)   = \partial\Omega,
\label{constraint-2} \\
& X^\mu_\pm(t_0,\cdot)  \text{ and }  X^\mu_\pm(t_1,\cdot)  \text{  are constant in $\mu$.} \label{constraint-3} 
\end{align}
We denote: 
$$\left. \left(  \delta X_{+},\delta p_{+},\delta X_{-}\right) = \frac{d}{d\mu} \left(  X^\mu_{+}, p^\mu_{+},X^\mu_{-}\right) \right\vert_{\mu=0}   $$ 
The Euler Lagrange equations are recovered by stating that
\[
\left.  \frac{d}{d\mu}\mathcal{A}\left(  t_{0},t_{1},X_{+}^{\mu},p_{+}^{\mu
},X_{-}^{\mu}\right)  \right\vert _{\mu=0}=0.
\]


Of course, we have that%
\begin{align*}
&  \left.  \frac{d}{d\mu}\mathcal{A}\left(  t_{0},t_{1},X_{+}^{\mu},p_{+}%
^{\mu},X_{-}^{\mu}\right)  \right\vert _{\mu=0}\\
&  =\int_{t_{0}}^{t_{1}}\int_{\Omega_{+,0}}\varrho_{+}\dot{X}_{+}^{j}%
\delta\dot{X}_{+}^{j}+p_{+}\left(  \operatorname{Cof}DX_{+}\right)
_{jk}\partial_{k}(\delta X_{+}^{j})+\int_{t_{0}}^{t_{1}}\int_{\Omega_{+,0}%
}\left(  \det DX_{+}-1\right)  \delta p_{+}\\
&  + \int_{t_{0}}^{t_{1}}\int_{\Omega_{-,0}}\varrho_{-,0}\dot{X}_{-}^{j}%
\delta\dot{X}_{-}^{j} + \frac{\varrho_{-,0}^{2}}{(\det DX_{-})^{2}}e_{-}%
^{\prime}\left(  \frac{\varrho_{-,0}}{\det DX_{-}}\right)  \left(
\operatorname{Cof}DX_{-}\right)  _{jk}\partial_{k}(\delta X_{-}^{j}).
\end{align*}
Performing integration by parts and taking into account $\left(
\text{\ref{constraint}}\right)  $-$\left(  \text{\ref{constraint-2}}\right)  $-$\left(  \text{\ref{constraint-3}}\right)  $
as well as  $\left(  \text{\ref{presure_energy}}\right)  $ and \eqref{var_norm_eq_2}, we obtain that%
\begin{align}
&  \left.  \frac{d}{d\mu}\mathcal{A}\left(  t_{0},t_{1},X_{+}^{\mu},p_{+}%
^{\mu},X_{-}^{\mu}\right)  \right\vert _{\mu=0}\nonumber\\
&  =-\int_{t_{0}}^{t_{1}}\int_{\Omega_{+,0}}\varrho_{+}\ddot{X}_{+}^{j}\delta
X_{+}^{j}+\partial_{k}(p_{+}\left(  \operatorname{Cof}DX_{+}\right)
_{jk})(\delta X_{+}^{j})\nonumber\\
&  +\int_{t_{0}}^{t_{1}}\int_{\Omega_{+,0}}\left(  \det DX_{+}-1\right)
\delta p_{+}\nonumber\\
&  -\int_{t_{0}}^{t_{1}}\int_{\Omega_{-,0}}\varrho_{-,0}\ddot{X}_{-}^{j}\delta
X_{-}^{j}+\partial_{k}\left(  p_{-}\left(  \frac{\varrho_{-,0}}{\det DX_{-}%
}\right)  \left(  \operatorname{Cof}DX_{-}\right)  _{jk}\right)  \delta
X_{-}^{j}\nonumber\\
&  -\int_{t_{0}}^{t_{1}}\int_{\mathcal{I}\left(  0\right)  }%
p_{+}\operatorname{Cof}(DX_{+})_{jk}\delta X_{+}^{j}n^{k}\mathrm{d}\sigma
+\int_{t_{0}}^{t_{1}}\int_{\mathcal{I}\left(  0\right)  }p_{-}\left(
\frac{\varrho_{-,0}}{\det DX_{-}}\right)  \operatorname{Cof}(DX_{-}%
)_{jk}\delta X_{-}^{j}n^{k}\mathrm{d}\sigma.\label{variation}%
\end{align}
From  the change of variable formula for surface integrals, {\it cf.} \cite[Proposition 5.4.3]{Henrot}, we have for any function $f$:  
$$ \int_{\mathcal{I}(0)} f =   \int_{X_{+}^{-1}\circ X_{-}(\mathcal{I}(0))} f = \int_{\mathcal{I}(0)} f \circ X_{+}^{-1}\circ X_{-} \, \| \operatorname{Cof}(D(X_{+}^{-1}\circ X_{-})) n \| $$
Together with the change of variable formula for the normal vector : 
$$ n \circ X_+^{-1} \circ X_- =  \frac{\operatorname{Cof}(D(X_{+}^{-1}\circ X_{-}))n}{\|\operatorname{Cof}(D(X_{+}^{-1}\circ X_{-}))n\|} =   \frac{\operatorname{Cof}(DX_{+}^{-1})\circ X_{-} \, \operatorname{Cof}(DX_-)n}{\|\operatorname{Cof}(D(X_{+}^{-1}\circ X_{-}))n\|}  $$
 it implies 
\begin{multline*}
\int_{t_{0}}^{t_{1}}\int_{\mathcal{I}\left(  0\right)  }p_{+}\left(
\operatorname{Cof}DX_{+}\right)  _{jk}\delta X_{+}^{j}n^{k}d\sigma
\\ 
=\int_{t_{0}}^{t_{1}}\int_{\mathcal{I}\left(  0\right)  }\left[  p_{+}\circ
X_{+}^{-1}\circ X_{-}\right]  \left(  \operatorname{Cof}DX_{-}\right)
_{jk}\left[  \delta X_{+}^{j}\circ X_{+}^{-1}\circ X_{-}\right]  n%
^{k}d\sigma
\end{multline*} 
Taking into account \eqref{var_norm_eq} in  Remark \ref{Remark_var_norm}, the whole boundary term appearing in the derivative of the action can be written as%
\begin{equation} 
\int_{t_{0}}^{t_{1}}\int_{\partial\mathcal{I}\left(  0\right)  }\left\{
p_{-}\left(  \frac{\varrho_{-,0}}{\det DX_{-}}\right)  -p_{+}\circ X_{+}%
^{-1}\circ X_{-}\right\}  \delta X_{-}\cdot\operatorname{Cof}(DX_{-}%
)n\mathrm{d}\sigma\label{var_integral_term}%
\end{equation}
If we first take the variation $\partial X_\pm$ compactly supported inside the
domains $\pm$, we obtain the equations%
\[
\varrho_{+}\ddot{X}_{+}^{j}+\partial_{k}\left(  p_{+}\left(
\operatorname{Cof}DX_{+}\right)  _{jk}\right)  =0\text{ in }\Omega_{+,0},
\]
respectively
\[
\varrho_{-,0}\left(  x\right)  \ddot{X}_{-}^{j}+\partial_{k}\left(  p_{-}\left(
\frac{\varrho_{-,0}}{\det DX_{-}}\right)  \left(  \operatorname{Cof}%
DX_{-}\right)  _{jk}\right)  =0\text{ in }\Omega_{-,0},
\]
for all $j=1\dots d$. Using \eqref{gradient_euler_lagrange} and the fact that $\det(DX_+) = 1$, the first equation translates in eulerian coordinates into \footnote{Recall that for the liquid $\varrho_{+}=cst$ it is for this
reason that we do not mark it with an $e$-upperscript.}%
\[
\varrho_{+}(\partial_{t}u_{+}^{e}+u_{+}^{e}\cdot\nabla u_{+}^{e})+\nabla
p_{+}^{e}=0\text{ in }\Omega_{+,t}:=X_{+}\left(  t,\Omega_{+,0}\right)
\]
By \eqref{gradient_euler_lagrange}, the second equation together with \eqref{rho-lagrange} yields
\[
\left\{
\begin{array}
[c]{l}%
\partial_{t}\varrho_{-}^{e}+\operatorname{div}\left(  \varrho_{-}^{e}u_{-}%
^{e}\right)  =0\text{ in }\Omega_{-,t}:=X_{-}\left(  t,\Omega_{-,0}\right)  \\
\varrho_{-}^{e}(\partial_{t}u_{-}^{e}+u_{-}^{e}\cdot\nabla u_{-}^{e})+\nabla
p_{-}\left(  \varrho_{-}^{e}\right)  =0\text{ in }\Omega_{-,t}.
\end{array}
\right.
\]

Note that the variation $\partial X_-$, and more generally $X_-^\mu$   can be taken arbitrarily, and then 
$X_{+}^\mu$ is chosen to ensure the validity of $\left(  \text{\ref{constraint}}\right)
$-$\left(  \text{\ref{constraint-2}}\right)$-$\left(  \text{\ref{constraint-3}}\right)$. Hence, from $\left(
\text{\ref{var_integral_term}}\right)$, we deduce
\[
p_{-}\left(  \frac{\varrho_{-}}{\det DX_{-}}\right)  =p_{+}\circ X_{+}%
^{-1}\circ X_{-}\text{ on }\mathcal{I}\left(  0\right)
\]
or in eulerian coordinates%
\[
p_{-}\left(  \varrho_{-}^{e}\left(  t,x\right)  \right)  =p_{+}^{e}\left(
t,x\right)  \text{ on }\mathcal{I}\left(  t\right)  .
\]


Eventually, this lagrangian approach allows to recover all equations in eulerian coordinates
(and since by now there is no danger of confusion we drop the $e$-upperscript)%
\begin{equation} 
\left\{
\begin{array}
[c]{l}%
\Omega = \Omega_{+,t} \sqcup \Omega_{-,t} \sqcup \mathcal{I}(t), \quad \Omega_{-,t} \Subset \Omega, \\
\varrho_{+}(\partial_{t}u_{+}+u_{+}\cdot\nabla u_{+})+\nabla p_{+}=0\text{ in
} \Omega_{+,t},\\
\partial_{t}\varrho_{-}+\operatorname{div}\left(  \varrho_{-}u_{-}\right)
=0\text{ in }\Omega_{-,t},\\
\varrho_{-}(\partial_{t}u_{-}+u_{-}\cdot\nabla u_{-})+\nabla p_{-}\left(
\varrho_{-}\right)  =0,\text{ in }\Omega_{-,t},\\
u_+ \cdot n = 0 \text{ on } \pa \Omega, \\
u_{+}\cdot n=u_{-}\cdot n = v_{\mathcal{I}}\text{ on }\mathcal{I}\left(  t\right), \\ 
p_{+}=p_{-}\left(  \varrho_{-}\right)  ,\text{ on }\mathcal{I}\left(
t\right) .
\end{array}
\right.  \label{system_1}%
\end{equation}

\begin{remark}
To take surface tension at the interface into account, the action
functional $\left(  \text{\ref{action_two_fluids}}\right)  $ should be
modified with the extra term%
\[
\int_{t_{0}}^{t_{1}}\int_{X_{-}(t,\mathcal{I}(0))}\mathrm{d}\sigma.
\]
 The derivative of the function%
\[
\mu\rightarrow\int_{t_{0}}^{t_{1}}\int_{X_{-}^{\mu}(t,\mathcal{I}(0)%
)}\mathrm{d}\sigma
\]
with $\left(  X_{-}^{\mu}\right)  _{\mu}$ virtual motions, computed at $\mu=0$
gives%
\[
\int_{t_{0}}^{t_{1}}\int_{\mathcal{I}(0)} H(X_-) \operatorname{Cof}(DX_{-}) n \cdot \delta
X_{-}\mathrm{d}\sigma,
\]
where $H(t,\cdot)$ is the mean curvature of the interface $\mathcal{I}(t)$. The impact on the Euler-Lagrange
equations is that the first boundary condition from $\left(
\text{\ref{system_1}}\right)  $ changes into%
\[
p_{+}-p_{-}\left(  \varrho_{-}\right)  =H.
\]
\end{remark}

\begin{remark} \label{rem_compatibility condition}
    The formal derivation  we have just performed consists in showing that a critical point of $\mathcal{A}$ satisfies system \eqref{system_1} in eulerian coordinates. It is motivated by the least action principle, whereby the evolution in time of the two-phase fluid should be governed by a minimizer of the action $\mathcal{A}$, therefore by a critical point of $\mathcal{A}$, and therefore by a solution of  \eqref{system_1}. Still, wether or not such  a minimizer exists, or even a critical point of $\mathcal{A}$ or a solution of \eqref{system_1}, is an independent question that requires a specific study. It notably involves some compatibility conditions on the initial data, due to incompressibility of the field $u_+$: a natural one is that $\int_{\mathcal{I}(0)} u_+\vert_{t=0} \cdot n = 0$. Indeed, integrating the divergence-free condition for $u_+$ over $\Omega_{t,+} := X_+(t,\Omega_{0,+})$ implies that $\int_{\mathcal{I}(t)} u_+ \cdot n = 0$ for all time. Such kind of requirements on the initial data and the well-posedness problem will reappear in Section \ref{sec_WP_reduced}.  \end{remark}
\begin{remark}
  
We mentioned in the introduction several works where the derivation of PDEs from fluid mechanics was obtained from a similar Euler-Lagrange approach. Among those, one may stress again article \cite{GlassSueur2012} on the derivation of fluid-solid interaction systems. In this case, the flow $X_-$ inside each solid particle is looked for in the class of affine direct isometries, so that in eulerian coordinates, denoting $\Omega_k(t)$ the domain of the  $k$-th particle at time $t$:  
$$ u_-(t,x) = c_k'(t) + \omega_k(t) \times (x - c_k(t)), \quad x \in \Omega_k(t).$$
In this setting, the  action functional is reduced to the kinetic energy: 
\begin{align*}
&  \mathcal{A}\left(  t_{0},t_{1},X_{+},p_{+},X_{-}\right)  \nonumber\\
&  :=\int_{t_{0}}^{t_{1}}\int_{\Omega_{+,0}}\varrho_{+}\frac{|\dot{X}_{+}%
|^{2}}{2}+p_{+}\left(  \det DX_{+}-1\right)  +\int_{t_{0}}^{t_{1}}\int
_{\Omega_{-,0}}\varrho_{-,0}\frac{|\dot{X}_{-}|^{2}}{2}
\end{align*}
Moreover, the variation $\pa X_-$ is no longer arbitrary, as $X_-^\mu$ has to be an isometry for each $\mu$ in each $\Omega_k(0)$.  If $X^\mu_-(t,x) = c^\mu_k(t) + R^\mu_k(t) x $, $x \in \Omega_k(0)$, with $c^\mu_k(t) \in \R^3$ and $R^\mu_k(t) \in SO_3(\R)$,  it  can only be of the form
$$\pa X_-(t,x) = \frac{d}{d\mu} c_k^\mu(t)\vert_{\mu=0}  +  \frac{d}{d\mu} R_k^\mu(t)\vert_{\mu=0} x, \quad x \in \Omega_k(0),$$
 with $\mu \rightarrow c^\mu_k$ a path in $\R^3$ through $c_k$  and $\mu \rightarrow R^\mu_k$ a path in $SO3(\R)$ through $R_k$. It is classical that this is equivalent to 
 $$  \pa X_-(t,x)  = \tilde{v}_k(t) + \tilde{\omega}_k(t) \times (R_k(t)x), \quad \tilde{v}_k,  \omega_k \text{ with values in $\R^3$}, \quad x \in \Omega_k(0). $$
 In this reduced setting, it can be shown that the continuity of the pressure at the interface is replaced by the following relations: 
\begin{align*}
     M_k c_k''(t) &  = - \int_{\pa \Omega_k(t)} p_+(t,\cdot) n_t, \\ 
    \frac{d}{dt} (J_k \omega_k)(t)  & = \int_{\pa \Omega_k(t)}  p_+(t,\cdot)  n_t \times (x - c_k(t)) \\
    \text{ with } \: J_k(t) & = \int_{\Omega_k(t)} \Big( |x - c_k(t)|^2 -  (x-c_k(t)) \otimes (x-c_k(t)) \Big). 
\end{align*} 
We refer to \cite{GlassSueur2012} for much more. 
\end{remark}

\section{Reduced models with shape-constrained dynamics}
\label{sec_shape_constrained}
We turn in this section to the derivation of reduced models for bubbly flows, where we wish to simplify the geometry and dynamics of the bubbles. One crude and very common simplification that we have in mind is to assimilate bubbles to spheres. With the notations of the previous section, the idea is to assume that initially the interface is 
$$ \mathcal{I}(0) = \cup_{i=1}^N S(c_{i,0}, r_{i,0})$$
that is a union of disjoint spheres of centers $c_{i,0}$ and radii $r_{i,0}$ : $\forall i,j$, $|c_{i,0} - c_{j,0}| > r_{i,0} + r_{j,0}$. We would like the interface at positive times to still be described by a union of spheres:
$$ \mathcal{I}(t) = \cup_{i=1}^N S(c_i(t), r_i(t))$$
and to determine the dynamics of the centers $c_i(t)$ and radii $r_i(t)$. Of course, adding such a constraint on the interface to  \eqref{system_1} would lead to an overdetermined system. The point is to understand how to relax \eqref{system_1} in order to obtain a well-posed system when incorporating the constraint of sphericity.  As discussed in the introduction, to restrict to spherical bubbles may be too stringent:  consideration of ellipsoidal bubbles, or of more general shapes (still with a limited number of degrees of freedom) may be more accurate. Therefore, we introduce in the next section a general formalism that leads  to reduced models for a large class of bubble shapes. 

\subsection{General formalism} \label{sec_formalism}
We shall represent our bubbles thanks to a family $(\mathcal{S}_m)_{m \in M}$, where for each $m \in M$,  $\mathcal{S}_m$ is the boundary of a smooth bounded domain $\Omega_m$ of $\R^3$. The parameter set $M$ encodes the shape constraints that we impose on the bubbles: it is finite dimensional, representing the finite number of degrees of freedom allowed for the bubbles deformation. More precisely, we assume that $M$ is a smooth submanifold of $\R^d$ for some $d \ge 1$,  and that   
$$ \bigsqcup_{m \in M} \mathcal{S}_m := \bigcup_{m \in M} \{m\} \times \mathcal{S}_m  \subset M \times \R^3 $$
is  a smooth submanifold of $\R^d \times \R^3$. We further assume that $m \rightarrow S_m$ is one to one.  

\medskip
The simplest example we have in mind is spherical bubbles. For any $c \in \R^3$ and $r > 0$, we denote $S(c,r)$ the sphere of center $c$ and radius $r$. In this case we take  
$$ M = \Big\{ m = (c,r) \in  \R^3 \times (0,\infty),  S(c,r) \subset \Omega \big\}, \quad \mathcal{S}_m = S(c,r)  $$
Another example is ellipsoidal bubbles. For $c \in \R^3$, $S \in S_3^{++}(\R)$ the set of positive definite symmetric matrices, we define the ellipsoid
$$ E(c,S) = \{ c + Sy, \quad y \in \mathbb{S}^2 \}. $$
In that case we consider 
$$ M = \Big\{ m = (c,S) \in  \R^3 \times S_3^{++}(\R),  E(c,S) \subset \Omega \big\},, \quad  \mathcal{S}_m = E(c,S).    $$
\begin{remark}
We could easily extend our modelling to situations where the bubbles have different shapes, for instance some of them being spherical, some of them being ellipsoidal. This means we could consider different collections $\big(\mathcal{S}_m^k\big)_{m \in M_k}$, $k=1\dots N$, with bubble $1$ from the first collection  $\big(\mathcal{S}_m^1\big)_{m \in M_1}$, bubble $2$ from the second collection $\big(\mathcal{S}_m^2\big)_{m \in M_2}$, etc. 
\end{remark}

\begin{remark}
In all what follows, notation $\dot{m}$ will denote a tangent vector at $m$, that is an element of the tangent space $T_m M$. On the contrary, for any path $m = m(\tau)$ on $M$ parametrized by some $\tau$, the derivative with respect to the parameter will be denoted $m'$ to avoid confusion.  
\end{remark}
\medskip
A crucial notion  for what will follow is the notion of {\em normal velocity}, that we have briefly seen in the proof of Proposition \ref{prop_normal_velocities}, and that is described in detail in \cite{Pruss}. We adapt here this description to our setting of shape-constrained dynamics.  Let $m \in M$, $\dot{m} \in T_m M$  and $x \in \mathcal{S}_m$. We consider  a $C^1$ path in 
$\: \bigsqcup_{m \in M} \mathcal{S}_m$ through $(m,x)$: 
$$ (-\tau_0, \tau_0) \rightarrow \bigsqcup_{m \in M} \mathcal{S}_m, \quad  \tau \rightarrow  \big(M(\tau), X(\tau)\big)$$
 with  $M(0) = m$, $M'(0) = \dot{m}$ and $X(0) = x$. By the arguments in \cite[pp 76-77]{Pruss}\footnote{Those arguments are applied to interfaces parametrized by  $t \in \R$  but extend straightforwardly to parametrization by $m \in M$.} there exists a neighborhood of $m$ in $M$ such that for all $\tilde m$ in this neighborhood: 
 $$ \mathcal{S}_{\tilde m} \: = \: \Big\{ y + \rho(\tilde m,y) n(y), \: y \in \mathcal{S}_m \Big\} $$ 
 where $n(y)$ is the  normal vector to $\mathcal{S}_m$ at $y$, and $\rho = \rho(\tilde m,y)$ is a smooth real-valued function. For $\tau$ small enough so that $M(\tau)$ belongs to this neighborhood, as $X(\tau) \in \mathcal{S}_{M(\tau)}$, we can write
 $$ X(\tau) = y(\tau) +  \rho(M(\tau),y(\tau)) n(y(\tau)) $$
 for some function $\tau \rightarrow y(\tau)$ with values in $\mathcal{S}_m$ and  as regular as $X$. We may then compute the derivative at $\tau = 0$: 
 \begin{align*} 
 X'(0)  & = y'(0) + \langle \pa_m\rho(m,x) , \dot{m} \rangle n(x)  +  \langle \pa_y\rho(m,x) , y'(0)  \rangle n(x) +  \rho(m,x) \frac{d}{d\tau} \big[n(y(\tau)\big]\vert_{\tau=0} \\  
 & = \langle \pa_m\rho(m,x) , \dot{m} \rangle n(x) + 
 \frac{d}{d\tau} \Big[ y(\tau) +  \rho(m,y(\tau)) n(y(\tau))  \Big]\vert_{\tau=0} \\
 & = \langle \pa_m\rho(m,x) , \dot{m} \rangle n(x) + y'(0)  
 \end{align*}
 where we noticed for the last equality that $\rho(m,y) = 0$ for all $y \in \mathcal{S}_m$. As $y'(0)$ is tangent to $\mathcal{S}_m$, we end up with the normal velocity: 
$$ X'(0) \cdot n(x) = \langle \pa_m\rho(m,x) , \dot{m} \rangle   $$
This formula shows that this normal velocity depends only on  $m,x$ and $\dot{m}$, not on the full path $X$. It can be interpreted as  the normal velocity at $x$ induced by the infinitesimal displacement of the interface $\mathcal{S}_m$ with velocity $\dot{m}$. Finally, as it is linear in $\dot{m}$, we can introduce the map 
\begin{equation}  \label{defi_normal_velocity}
V :  (m,x) \in \bigsqcup_{m \in M} \mathcal{S}_m \rightarrow V_m(x) \in T^*_m M, \quad \big\langle V_m(x) , \dot{m} \big\rangle = X'(0) \cdot n(x)   
\end{equation}
for any path  $\tau \rightarrow \big(M(\tau),X(\tau)\big)$  in $\bigsqcup_{m \in M} \mathcal{S}_m$ such that $M(0) = m$, $M'(0) = \dot{m}$ and $X(0) =x$.

\medskip
The idea is now to apply the same least action principle as in Section \ref{section_immiscible}. More precisely, we  consider critical points $(X_+,p_+,X_-)$ of the lagrangian $\mathcal{A}$ in \eqref{action_two_fluids}, where we add to the constraints \eqref{relation_1}-\eqref{constraint_fixed_bd} the extra requirement that the interface $\mathcal{I}(t)$ is a union of $N$ disjoint bubbles from the class $(\mathcal{S}_m)_{m \in M}$: for some functions of time $m_1, \dots, m_N$,    
\begin{equation} \label{constraint_shape}
\forall t \ge 0, \quad \mathcal{I}(t) = \bigcup_{k=1}^N \mathcal{I}_k(t), \quad \mathcal{I}_k(t) = \mathcal{S}_{m_k}(t) \quad   \forall k=1, \dots, N. 
\end{equation}
Note that functions $m_k$ are uniquely defined as we assumed the map $m \rightarrow \mathcal{S}_m$ to be one-to-one.  
\begin{remark} \label{rem_normal_velocity}
    Condition \eqref{relation_1}, or \eqref{relation_normale} in eulerian formulation, can be made  more explicit under the extra requirement \eqref{constraint_shape}. Indeed, for all $k=1 \dots N$, for all $x \in \mathcal{I}_k(0)$,  if $\displaystyle X_-(0,x) \in \mathcal{I}_k(0) = S_{m_k(0)}$, then  $X_-(\tau,x) \in \mathcal{I}(\tau)= \mathcal{S}_{m_k(\tau)}$ for all $\tau  \ge 0$. Therefore, for any arbitrary fixed $t$, the map 
    $$ \big(M(\tau), X(\tau)\big) =  \big(m_k(t+\tau), X_-(t+\tau,x)\big)$$
    is a path in $\bigsqcup_{m \in M} \mathcal{S}_m$ such that $M(0) = m_k(t)$, $M'(0) = m_k'(t)$ and $X(0) = X_-(t,x)$. By the definition in \eqref{defi_normal_velocity} we have 
    $$ X'_-(t,x) \cdot n^e_t(X_-(t,x))  = X'(0)\cdot n^e_t(X_-(t,x))  =    \big\langle V_{m_k(t)}(X_-(t,x)) \: , \: m_k'(t) \big\rangle    $$ 
In other words, condition \eqref{relation_normale} can be refined into
\begin{equation} \label{relation_normale_refined}
u_{+}^{e}(t,\cdot)\cdot n_t^{e}=u_{-}^{e}(t,\cdot)\cdot n_t^{e} = \big\langle V_{m_k(t)}(\cdot) , m'_k(t) \big\rangle \quad  \text{ on } \quad \mathcal{I}\left(
t\right) = \mathcal{S}_{m_k(t)}, \quad \forall k=1\dots N.
\end{equation}
\end{remark}
At the level of the virtual motions $X_\pm^\mu, p_+^\mu$, \eqref{constraint_shape} further implies that besides \eqref{constraint}-\eqref{constraint-2}-\eqref{constraint-3} we must have 
\begin{equation} \label{constraint-4}
 X_\pm^\mu(t,\mathcal{I}_k(0)) = \mathcal{S}_{m^\mu_k(t)}, \quad \textrm{for some $m_k^\mu$ with values in $M$, for all $k=1\dots N$, for all $t \ge 0$.} 
\end{equation}
We introduce
$$m_k := m_k^\mu\vert_{\mu=0}, \quad  \pa m_k := \pa_\mu m_k^\mu\vert_{\mu=0}, \quad \pa X_\pm := \pa_\mu X_\pm^\mu\vert_{\mu=0}.$$
This time, for any fixed $x \in \mathcal{I}_k(0)$ and any fixed $t$, we consider 
$$ \big(M(\mu) , \mathcal{X}_\pm(\mu)\big) =   \left(m_k^\mu(t) ,  X_\pm^\mu(t,x) \right)$$
as a path in $\bigsqcup_{m \in M} \mathcal{S}_m$  such that $M(0) = m_k(t)$, $M'(0) = \pa m_k(t)$ and $\mathcal{X}_\pm(0) =X_\pm(t,x)$.  By the definition \eqref{defi_normal_velocity}, it follows that    
\begin{equation} \label{extra_constraint_variation}
\pa X_{\pm}(t,x) \cdot n^e_t(X_\pm(t,x))  =  \mathcal{X}_\pm'(0) \cdot n^e_t(X_\pm(t,x)) =  \big\langle V_{m_k}(X_\pm(t,x))  , \pa m_k(t) \big\rangle    
\end{equation}
where $n^e_t$ is the normal field at $\mathcal{I}_k(t) = \mathcal{S}_{m_k(t)}$. 

\medskip
It remains to derive the Euler-Lagrange equations satisfied by a critical point $(X_+,p_+,X_-)$ of the action $\mathcal{A}$. Most of the computations carried in Section \ref{section_immiscible} remain valid under the additional constraint on the shape of the bubbles. One can take for instance variations $\pa X_\pm$ and $\pa p_+$ that have compact support in time and space, and recover the incompressible Euler equation in   $\Omega_{+,t}$, resp. the compressible Euler equations in $\Omega_{-,t}$. As seen in Remark \ref{rem_normal_velocity}, the condition on the normal velocities takes the form \eqref{relation_normale_refined}. The main point is to understand how the continuity of the pressure at the interface is modified by the shape constraint. Exactly as in Section \ref{section_immiscible}, we obtain the relation \eqref{var_integral_term}, that we write as: for all $k=1 \dots N$,   
$$ \int_{t_0}^{t_1} \int_{\mathcal{I}_k(0)} \big(p_-(\varrho^e_- \circ X_-)  - p^e_+ \circ X_- \big) \pa X_- \cdot n^e(X_-) \| \operatorname{Cof}(DX_{-}%
)n \| \mathrm{d}\sigma  = 0   $$ 
The change compared to  Section \ref{section_immiscible} is the additional constraint \eqref{extra_constraint_variation}. Replacing in the previous equality (with minus sign), we find that 
\begin{equation*}
   \int_{t_0}^{t_1} \int_{\mathcal{I}_k(0)} \big(p_-(\varrho^e_- \circ X_-)  - p^e_+ \circ X_- \big) \big\langle V_{m_k}(X_-) , \pa m_k \big\rangle \| \operatorname{Cof}(DX_{-}%
)n \| \mathrm{d}\sigma  = 0   
\end{equation*}
where we remind that $m_k = m_k(t)$ is such that $\mathcal{I}(t) = \mathcal{S}_{m_k}(t)$, and $\pa m_k = \pa m_k(t) \in T_{m_k(t)}M$. As virtual motions may lead to arbitrary elements $\pa m_k \in T_{m_k} M$, we deduce that for all $t \ge 0$, for all $k=1\dots N$,
\begin{equation*}
   \int_{\mathcal{I}_k(0)} \big(p_-(\varrho^e_- \circ X_-)  - p^e_+ \circ X_- \big)  V_{m_k}(X_-) \| \operatorname{Cof}(DX_{-}%
)n \| \mathrm{d}\sigma  = 0   
\end{equation*}
or after a change of variable: for all $t \ge 0$, , for all $k=1 \dots N$,
\begin{equation}
   \int_{\mathcal{I}_k(t)} \big(p_-(\varrho^e_-(t,\cdot))   - p^e_+(t,\cdot) \big)  V_{m_k}(\cdot)   \mathrm{d}\sigma  = 0, \quad \mathcal{I}_k(t) = \mathcal{S}_{m_k(t)}   
\end{equation}
We insist that for each $t$ and $x \in \mathcal{I}_k(t)$, $ V_{m_k(t)}(x)$ is a linear  form on $T_{m_k(t)} M$, and so is the left-hand side. Eventually, the system \eqref{system_1} is replaced by  
\begin{equation} 
\left\{
\begin{array}
[c]{l}%
\Omega = \Omega_{+,t} \sqcup \Omega_{-,t} \sqcup \mathcal{I}(t), \quad \Omega_{-,t} \Subset \Omega, \\
\mathcal{I}(t) = \bigcup_{k=1}^N \mathcal{I}_k(t), \quad \mathcal{I}_k(t) =  \mathcal{S}_{m_k(t)}  \quad \forall  k, \\
\varrho_{+}(\partial_{t}u_{+}+u_{+}\cdot\nabla u_{+})+\nabla p_{+}=0\text{ in
}\Omega_{+,t},\\
\operatorname{div} u_{+} =0\text{ in
}\Omega_{+,t},\\
\partial_{t}\varrho_{-}+\operatorname{div}\left(  \varrho_{-}u_{-}\right)
=0\text{ in }\Omega_{-,t},\\
\varrho_{-}(\partial_{t}u_{-}+u_{-}\cdot\nabla u_{-})+\nabla p_{-}\left(
\varrho_{-}\right)  =0,\text{ in }\Omega_{-,t},\\
u_+ \cdot n = 0 \text{ on }\pa \Omega, \\
u_{+}\cdot n=u_{-}\cdot n = \big\langle V_{m_k(t)}(\cdot)  ,  m'_k(t) \big\rangle \text{ on }\mathcal{I}_k\left(  t\right) \quad \forall k,  \\
  \int_{\mathcal{I}_k(t)} \big(p_-(\varrho_-(t,\cdot))   - p^e_+(t,\cdot) \big) V_{m_k(t)}(\cdot)  \mathrm{d}\sigma  = 0 \quad \forall k.  
\end{array}
\right.  \label{system_2}%
\end{equation}

In the next paragraphs, we will consider special cases (spherical bubbles, ellipsoidal bubbles) for which   system \eqref{system_2}, especially the last boundary condition, will be made clearer. 

\subsection{Spherical gas bubbles} \label{sec_spherical_bubbles}

\subsubsection{Coupled incompressible/compressible model}
In this paragraph, we want to model the evolution of $N$ bubbles, assuming the sphericity of each bubble at all times. We will rely on the formalism of the previous paragraph. We wish the liquid/bubbles interface to be described for all time by 
$$ \mathcal{I}(t) = \bigcup_{k=1}^N \mathcal{I}_k(t), \quad \mathcal{I}_k(t) = S(c_k(t),r_k(t))$$
where $S(c,r)$ is the sphere of center $c$ and radius $r$. 
This shape constraint enters the framework of the previous paragraph. We set 
$$ M = \Big\{ m = (c,r) \in  \R^3 \times (0,\infty),  S(c,r) \subset \Omega \big\}, \quad \mathcal{S}_m = S(c,r)  $$
Then, for all $k=1\dots N$, for all $t$, $\mathcal{I}_k(t) = S_{m_k(t)}$ with $m_k(t) = \big(c_k(t), r_k(t)\big)$.  It remains to write \eqref{system_2} in this case, more precisely to specify the last two conditions in \eqref{system_2}. Therefore, we introduce the map 
$$ \phi_m(y) := c + r y. $$
For all $k$, for all $t$,   
$\mathcal{S}_{m_k(t)} = \phi_{m_k(t)}(\mathbb{S}^2)$.
Let $t$ fixed, $x \in \mathcal{I}_k(t)$. Let  $y \in \mathbb{S}^2$ such that $\phi_{m_k(t)}(y) = x$.  Then, the map  
$$ \big( M(\tau), X(\tau)\big) = \big( m_k(t+\tau) , \phi_{m_k(\tau)}(y)\big) =  \big( m_k(t+\tau) , c_k(t+\tau) + r_{k}(t+\tau) y\big)  $$
is  is a path in $\bigsqcup_{m \in M} \mathcal{S}_m$ such that $M(0) = m_k(t)$, $M'(0) = m_k'(t)$ and $X(0) = x$. Hence, 
$$ \big\langle V_{m_k(t)}(x) , m'_k(t) \big\rangle = \frac{d}{d\tau} \phi_{m_k(\tau)}(y)\vert_{\tau=t} \cdot n(x) = c'_k(t) \cdot n(x) + r'_k(t) y \cdot n(x) = c'_k(t) \cdot n(x) + r'_k(t) $$ 
using that $n(x)=y$.  Hence, the second-to-last condition in \eqref{system_2} reads 
$$ u_{+}\cdot n =u_{-}\cdot n = c'_k(t) \cdot n(x) + r'_k(t) \text{ on }\mathcal{I}_k\left(  t\right) \quad \forall k,  $$
As regards the last condition, we need to compute 
 $ \big\langle V_{m_k(t)}(x) , \dot{m} \big\rangle $
 for any 
 $$\dot{m} = (\dot{c} , \dot{r})  \in T_{m_k(t)} M = \R^3 \times \R.$$
   We fix $t$ and $x \in \mathcal{S}_{m_k(t)}$. Let $y \in \mathbb{S}^2$ such that $\phi_{m_k(t)}(y) = x$. Using the path 
$$ \big(M(\tau), X(\tau)\big) = \big(M(\tau), \phi_{M(\tau)}(y)\big), \quad \text{ where } \: M(\tau) = \big(c_k(t) + \dot{c} \tau , r_k(t) +  \dot{r} \tau  \big)  $$
satisfying $M(0)= m_k(t)$, $M'(0) = \dot{m}$ and $X(0) = x$ we find 
$$ \big\langle V_{m_k(t)}(x) , (\dot{c} , \dot{r})  \big\rangle = X'(0)  \cdot n(x) = \dot{c} \cdot n(x) + \dot{r} $$
Hence, the last condition in \eqref{system_2} is equivalent to:  for all $\dot{c} \in \R^3, \dot{r} \in \R$, for all $t$ and $k$, 
\begin{align*}
    \int_{\mathcal{I}_k(t)} \big(p_-(\varrho^e_-(t,\cdot))    - p^e_+(t,\cdot) \big)  \left( \dot{c} \cdot n +   \dot{r} \right)   \mathrm{d}\sigma = 0 
    \end{align*}
or equivalently 
\begin{align*}
    \int_{\mathcal{I}_k(t)} \big(p_-(\varrho^e_-(t,\cdot))    - p^e_+(t,\cdot) \big)  n    \mathrm{d}\sigma = 0 \\
    \int_{\mathcal{I}_k(t)} \big(p_-(\varrho^e_-(t,\cdot))    - p^e_+(t,\cdot) \big)   \mathrm{d}\sigma = 0
\end{align*}
Eventually, we gather the boundary conditions at the interfaces with the remaining equations of \eqref{system_2}. We get

\begin{equation}
\left\{
\begin{array}
[c]{l}%
\Omega = \Omega_{+,t} \sqcup \Omega_{-,t} \sqcup \mathcal{I}(t), \quad \Omega_{-,t} \Subset \Omega, \\
\mathcal{I}(t) = \bigcup_{k=1}^N \mathcal{I}_k(t), \quad \mathcal{I}_k(t) =  S(c_k(t),r_k(t))   \quad \forall  k, \\
\varrho_{+}(\partial_{t}u_{+}+u_{+}\cdot\nabla u_{+})+\nabla p_{+}=0\text{ in
}\Omega_{+,t},\\
\partial_{t}\varrho_{-}+\operatorname{div}\left(  \varrho_{-}%
u_{-}\right)  =0\text{ in }  \Omega_{-,t}, \\
\varrho_{-}(\partial_{t}u_{-}+u_{-}\cdot\nabla u_{-})+\nabla
p_{-}\left(  \varrho_{-}\right)  =0 \text{ in }  \Omega_{-,t} \\
u_+ \cdot n = 0 \text{ on } \pa \Omega \\
u_{+}\cdot n=u_{-}\cdot n = c'_k(t) \cdot n + r'_k(t) \text{ on }   \mathcal{I}_{k}\left(  t\right) \quad \forall k,  \\
\int_{\mathcal{I}_{k}\left(  t\right)  }p_{+}n\mathrm{d}\sigma=\int_{\mathcal{I}_{k}\left(  t\right)  }p_{-}\left(  \varrho_{-}\right)  n\mathrm{d}%
\sigma \quad \forall k,\\
\int_{\mathcal{I}_{k}\left(  t\right)  }p_{+}\mathrm{d}\sigma=\int_{\mathcal{I}_{k}\left(  t\right)  }p_{-}\left(  \varrho_{-}\right)  \mathrm{d}%
\sigma \quad \forall k.
\end{array}
\right.  \label{Euler_constraint}
\end{equation}

\begin{remark}
If surface tension is considered then the last equation of $\left(
\text{\ref{Euler_constraint}}\right)  $ will change into%
\[
\int_{\mathcal{I}_{k}\left(  t\right)  }p_{+}\mathrm{d}\sigma=\int_{\mathcal{I}_{k}\left(  t\right)  }p_{-}\left(  \varrho_{-}\right)  \mathrm{d}%
\sigma+(coef)\times8\pi r_{k}\left(  t\right)  .
\]

\end{remark}

\subsubsection{Simplified models}\label{sec_simplified_models}

In this section we present two simplified models that can be obtained from $\left(  \text{\ref{Euler_constraint}}\right)  $ under some additional 
hypothesis or directly as the Euler-Lagrange equations of some  variants of the action functional $\mathcal{A}$. 
A first simplification is to consider that the flow in each bubble
is radial. For each $k=1 \dots N$, we introduce $\varrho_k = \varrho_k(t,r)$ such that 
$\rho_-(t,x) = \varrho_k(t,|x-c_k(t)|)$, and $u_k = u_k(t,r)$ such that $\big( u_-(t,x) - c_k(t) \big) \cdot n(t,x) = u_k(t,|x-c_k(t)|)$,  $x \in B(c_k(t),r_k(t))$.We get 
\begin{equation}
\left\{
\begin{array}
[c]{l}%
\Omega = \Omega_{+,t} \sqcup \Omega_{-,t} \sqcup \mathcal{I}(t), \quad \Omega_{-,t} \Subset \Omega, \\
\mathcal{I}(t) = \bigcup_{k=1}^N \mathcal{I}_k(t), \quad \mathcal{I}_k(t) =  S(c_k(t),r_k(t))   \quad \forall  k, \\
\varrho_{+}(\partial_{t}u_{+}+u_{+}\cdot\nabla u_{+})+\nabla p_{+}=0\text{ in
}\Omega_{+,t},\\
\operatorname{div}u_{+}=0 \text{ in } \Omega_{+,t},\\
\partial_{t}(r^{2}\varrho_{k})+\partial_{r}\left(  r^{2}\varrho_{k}%
u_{k}\right)  =0\text{ in }\left(  0,r_{k}\left(  t\right)  \right) \\
\partial_{t}\left(  r^{2}\varrho_{k}u_{k}\right)  +\partial_{r}\left(
r^{2}\varrho_{k}u_{k}^{2}\right)  +r^{2}\frac{\partial}{\partial r}(p_-\left(
\varrho_{k}\right)  )=0 \text{ in }\left(  0,r_{k}\left(  t\right)  \right) \quad \forall k,\\
u_+ \cdot n = 0 \text{ on } \pa \Omega, \\
u_+(t,\cdot) \cdot n = c'_k(t) \cdot n + r_k'(t) \text{ on } \mathcal{I}_{k}(t)\quad \forall k, \\
u_k(r_k(t)) =   r_k'(t)  \quad \forall k, \\
{\displaystyle\int_{\mathcal{I}_{k}\left(  t\right)  }}
p_{+}\left(  t,x\right)  n\left(  t,x\right)  \mathrm{d}\sigma=0,\quad \forall k,\\
{\displaystyle\int_{\mathcal{I}_{k}\left(  t\right)  }}
p_{+}\left(  t,x\right)  \mathrm{d}\sigma=4\pi r_{k}^{2}\left(  t\right)
p_-\left(  \varrho_{k}\left(  t,r_{k}\left(  t\right)  \right)  \right)  \quad \forall k.
\end{array}
\right.  \label{simplified_eul_1}%
\end{equation}

\bigskip
Another simplification of the system $\left(  \text{\ref{Euler_constraint}}\right)$ is obtained by neglecting the inertia of the bubbles. It follows that for all $k$,  $\na p_-(\rho_k) = 0$, where $\rho_k$ is the density of the $k$-th bubble.  Hence,  $\rho_k = \rho_k(t)$, and by conservation of mass: $\rho_k(t) = \frac{M_k}{\frac{4}{3}\pi r_k(t)^3}$, with $M_k$ the fixed mass of the $k$-th bubble. Eventually, $\left(  \text{\ref{Euler_constraint}}\right)$ reduces to 

\begin{equation}
\left\{
\begin{array}
[c]{l}%
\Omega = \Omega_{+,t} \sqcup \Omega_{-,t} \sqcup \mathcal{I}(t), \quad \Omega_{-,t} \Subset \Omega, \\
\mathcal{I}(t) = \bigcup_{k=1}^N \mathcal{I}_k(t), \quad \mathcal{I}_k(t) =  S(c_k(t),r_k(t))   \quad \forall  k \\
\varrho_{+}(\partial_{t}u_{+}+u_{+}\cdot\nabla u_{+})+\nabla p_{+}=0\text{ in
}\Omega_{+,t},\\
\operatorname{div}u_{+}=0 \text{ in } \Omega_{+,t},\\
u_+ \cdot n = 0 \text{ on } \pa \Omega, \\
u_+(t,\cdot) \cdot n = c'_k(t) \cdot n + r_k'(t) \text{ on } \mathcal{I}_{k}(t)\quad \forall k \\
{\displaystyle\int_{\mathcal{I}_{k}\left(  t\right)  }}
p_{+}\left(  t,x\right)  n\left(  t,x\right)  \mathrm{d}\sigma=0,\quad \forall k\\
{\displaystyle\int_{\mathcal{I}_{k}\left(  t\right)  }}
p_{+}\left(  t,x\right)  \mathrm{d}\sigma=4\pi r_{k}^{2}\left(  t\right)
p_-\left(  \frac{3 M_{k}}{4 \pi r_{k}^{3}}  \right)  \quad \forall k
\end{array}
\right.  \label{eq_sphere_simple}
\end{equation}

\subsection{Ellipsoidal bubbles}\label{sec_ellipsoidal_bubbles}
As another example of our general approach, we want now to allow for ellipsoidal bubbles, enlarging the class of admissible shapes: for all time, we should have  
$$ \mathcal{I}(t) = \bigcup_{k=1}^N \mathcal{I}_k(t), \quad  \mathcal{I}_k(t) = E(c_k(t), S_k(t))$$
where for $c \in \R^3$ and $S \in S_3^{++}(\R)$  the ellipsoid $E(c,S)$ is given by  
$$E(c,S) = \{ x = c + S y, \quad y \in \mathbb{S}^2   \}. $$ 
Again, this framework can be recast in the general one of  Paragraph \ref{sec_formalism}.  We set 
$$ M = \Big\{ m = (c,S) \in  \R^3 \times S_3^{++}(\R),  E(c,r) \subset \Omega \big\},, \quad  \mathcal{S}_m = E(c,S).    $$
 Then, for all $k=1\dots N$, for all $t$, $\mathcal{I}_k(t) = S_{m_k(t)}$ with $m_k(t) = \big(c_k(t), S_k(t)\big)$.  We consider this time  
$$\phi_m(y) = c + Sy. $$
Again, for all $k$, for all $t$,   
$\mathcal{S}_{m_k(t)} = \phi_{m_k(t)}(\mathbb{S}^2)$.  Let  $y \in \mathbb{S}^2$ such that $\phi_{m_k(t)}(y) = x$.  Proceeding exactly as in the previous paragraph, we find that the second-to-last condition of \eqref{system_2} involves 
$$ \big\langle V_{m_k(t)}(x) , m'_k(t) \big\rangle = c'_k(t) \cdot n(x) + S'_k(t) y \cdot n(x) = c'_k(t) \cdot n(x) + S'_k(t) S_k(t)^{-1}\big(x-c_k(t)\big) \cdot n(x)   $$ 
Hence, the second-to-last condition in \eqref{system_2} reads 
$$ u_{+}\cdot n=u_{-}\cdot n = c'_k(t) \cdot n(x) +  S'_k(t) S_k(t)^{-1}\big(x-c_k(t)\big) \cdot n(x)  \text{ on }\mathcal{I}_k\left(  t\right) \quad \forall k,  $$
As regards the last condition, we need to compute 
 $ \big\langle V_{m_k(t)}(x) , \dot{m} \big\rangle $
 for any 
$$\dot{m} = (\dot{c}, \dot{S})  \in T_{m_k(t)} M = \R^3 \times S_3(\R).$$
We consider the path in $M$
$$ M(\tau) = \big( c_k(t) + \dot{c} \tau, S(\tau) \big), \quad S(\tau) \in S_3^{++}(\R), \quad S(0) = S_k(t), \quad S'(0) = \dot{S},$$
as well as the corresponding path in  $\bigsqcup_{m \in M} \mathcal{S}_m$ given by $\tau \rightarrow \big(M(\tau),\phi_{M(\tau)}(y)\big)$. 
 We end up with 
$$ \big\langle V_{m_k(t)}(x) , (\dot{c} , \dot{S})  \big\rangle =  \dot{c} \cdot n(x) + \dot{S} S^{-1} (x - c_k(t)) \cdot n(x)  $$
Hence, the last condition of \eqref{system_2} reads: for all $\dot{c} \in \R^3$, $\dot{S} \in \mathcal{S}_3(\R)$, 
\begin{align*}
    \int_{\mathcal{I}_k(t)} \big(p_-(\varrho^e_-(t,\cdot))    - p^e_+(t,\cdot) \big)  \left( \dot{c} \cdot n  + \dot{S} S^{-1} (x - c_k(t)) \cdot n(x) \right)    \mathrm{d}\sigma(x) = 0 
    \end{align*}
This is equivalent to 
\begin{align*}
   & \int_{\mathcal{I}_k(t)} \big(p_-(\varrho^e_-(t,\cdot))    - p^e_+(t,\cdot) \big)  n    \mathrm{d}\sigma = 0 \\
    & \int_{\mathcal{I}_k(t)} \big(p_-(\varrho^e_-(t,x))    - p^e_+(t,x) \big) \left[ n(x) \otimes \big( S_k(t)^{-1} (x-c_k(t)) \big) \right]^{sym}     \mathrm{d}\sigma(x) = 0
\end{align*}
with notation $[M]^{sym} = \frac{1}{2}(M+M^t)$.
We end up with the reduced system 
\begin{equation}
\left\{
\begin{array}
[c]{l}%
\Omega = \Omega_{+,t} \sqcup \Omega_{-,t} \sqcup \mathcal{I}(t), \quad \Omega_{-,t} \Subset \Omega, \\
\mathcal{I}(t) = \bigcup_{k=1}^N \mathcal{I}_k(t), \quad \mathcal{I}_k(t) =  E(c_k(t),r_k(t))   \quad \forall  k, \\
\varrho_{+}(\partial_{t}u_{+}+u_{+}\cdot\nabla u_{+})+\nabla p_{+}=0\text{ in
}\Omega_{+,t},\\
\partial_{t}\varrho_{-}+\operatorname{div}\left(  \varrho_{-}%
u_{-}\right)  =0\text{ in }  \Omega_{-,t}, \\
\varrho_{-}(\partial_{t}u_{-}+u_{-}\cdot\nabla u_{-})+\nabla
p_{-}\left(  \varrho_{-}\right)  =0 \text{ in }  \Omega_{-,t} \\
u_+ \cdot n = 0 \text{ on } \pa \Omega \\
u_{+}\cdot n=u_{-}\cdot n = c'_k(t) \cdot n + S'_k(t) S_k(t)^{-1}(\cdot - c_k(t)) \cdot n \text{ on }   \mathcal{I}_{k}\left(  t\right) \quad \forall k,  \\
\int_{\mathcal{I}_{k}\left(  t\right)  }p_{+}n\mathrm{d}\sigma=\int_{\mathcal{I}_{k}\left(  t\right)  }p_{-}\left(  \varrho_{-}\right)  n\mathrm{d}%
\sigma \quad \forall k,\\
\int_{\mathcal{I}_{k}\left(  t\right)  }(p_{+} - p_{-}\left(  \varrho_{-}\right)) \left[ n \otimes \big( S_k(t)^{-1} (\cdot-c_k(t)) \big) \right]^{sym}    \mathrm{d}%
\sigma \quad \forall k.
\end{array}
\right.  \label{Euler_constraint_ellipse}%
\end{equation}

\section{Well-posedness of the reduced models} \label{sec_WP_reduced}
In the previous sections, we discussed the derivation of the standard equations for incompressible/compressible two-phase flows (Section \ref{section_immiscible}) and the derivation of reduced models with shape constraints (the whole Section  \ref{sec_shape_constrained}). Namely, we derived the equations that a critical point of some action functional should satisfy. Whether or not such critical point exists, or a solution to our models exists, is a different question, that we will only touch upon here.  

As pointed out in Remark \ref{rem_compatibility condition}, in  all the models seen before, if the domain $\Omega$ is bounded, the incompressibility condition $\operatorname{div} u_+ = 0$ requires compatibility conditions on the initial data, to be preserved through time. Indeed, taking into account the boundary condition $u \cdot n = 0$ at $\pa \Omega$, it is necessary that  
$$ \int_{\mathcal{I}(t)} u \cdot n = 0 \quad \forall t \ge 0.$$
This constraint is particularly stringent in the reduced models, for instance for the model \eqref{Euler_constraint}. For instance, for a single spherical bubble in a container ($N=1$, $\Omega$ bounded), the radius of the ball has to be constant ! A natural way to relax this constraint is to take $\Omega = \R^3$. For instance,  there exists a divergence-free field in $\R^3\setminus B(0,1)$ such that $u \cdot n  = 1$ at $\pa B(0,1)$: take $u = \frac{x}{|x|^2}$. 

\medskip
As emphasized in the introduction of the present paper, the well-posedness of the complete two-phase flow model  \eqref{system_1}  has been studied from a rigorous perspective in several articles. The analysis of reduced models for bubbly flows with shape constraints is however much more limited: as far as we know, dedicated articles  only deal with spherical bubbles, see the references in the introduction. Notably, the question of well-posedness of System \eqref{system_2} is open. 

\medskip
We shall make partial progress on this question here, by showing the well-posedness of a simplified version of \eqref{system_2}. For $m \in M$, we recall that   $\Omega_m$ is the smooth domain bounded by $\mathcal{S}_m$, see Section  \ref{sec_formalism}. We make three main hypotheses: 
\begin{enumerate}
\item $\Omega = \R^3$, that is the liquid fills in the whole space. 
\item The liquid flow is supposed to be irrotational : 
$\textrm{curl } u_+ = 0$. 
\item The pressure $p_k$, or equivalently the density $\rho_k$, is homogeneous in space. By conservation of mass, we deduce 
$$ p_k(t) = p_-(\rho_k(t)) = p_-\left(\frac{M_k}{|\Omega_{m_k}(t)|} \right)$$
where $M_k$ is the fixed mass of the $k$-th bubble, $k=1\dots N$. 
\end{enumerate}
The first hypothesis is not crucial, and one could consider a bounded domain $\Omega$, see the discussion of Paragraph \ref{sec_bounded_cavity}. 
Hence, we consider the following model:
\begin{equation} 
\left\{
\begin{array}
[c]{l}%
\partial_{t}u + u\cdot\nabla u +\nabla p =0\text{ in
} \R^3 \setminus \overline{\cup_{k=1}^N \Omega_{m_k(t)}},\\
\textrm{curl } u = 0, \quad \operatorname{div} u =0\text{ in
} \R^3 \setminus \overline{\cup_{k=1}^N \Omega_{m_k(t)}},\\
u(t,\cdot)  \cdot n = \langle V_{m_k(t)} , m'_k(t) \rangle \text{ on } \pa \Omega_{m_k(t)}, \quad \forall k,  \\
  \int_{\pa \Omega_{m_k(t)}} \big(p_-(\varrho_-(t,\cdot))   - p_k(t) \big) V_{m_k(t)}(\cdot)  \mathrm{d}\sigma  = 0,\quad \forall  k.
\end{array}
\right.  \label{system_3}%
\end{equation}
Compared to \eqref{system_2},  we suppressed the lowerscript $+$ for the fluid quantities and normalized the fluid density to $1$.  We recall that the map $(m,x) \rightarrow V_m(x)$ is defined in \eqref{defi_normal_velocity}. One may additionally prescribe a non-zero constant pressure at infinity $p \xrightarrow[|x| \rightarrow +\infty]{} p_\infty$.  
We will show that for an appropriate class of initial conditions, this system can be further reduced to a system of second order ODE's on $(m_1, \dots , m_N)$ which will have a unique local in time solution.  

\subsection{Shape-dependent gradient fields} \label{sec_shape_dependent}
To reformulate \eqref{system_3} as a set of ODEs, we will rely on some gradient fields, that we will call shape-dependent. We introduce the parameter set
$$ \bfM = \Big\{ \bfm = (m_1, \dots, m_N) \in M^N, \: \overline{\Omega_{m_i}} \cap \overline{\Omega_{m_j}} = \emptyset \quad \forall i \neq j \Big\} $$   
which is an open subset of $M^N$. Let $T\bfM$ its tangent bundle. For any
$$ (\bfm , \dot{\bfm}) \in T\bfM, \quad  \text{with } \: \dot{\bfm} = \left( \dot{m}_1, \dots, \dot{m}_N \right) $$
we introduce $\phi_{\bfm,\dot{\bfm}}$ the solution of 
\begin{equation} \label{system_phi_m}
\left\{
    \begin{aligned}
        -\Delta \phi_{\bfm,\dot{\bfm}} & = 0, \quad \text{ in } \R^3 \setminus \overline{\cup_{k=1}^N \Omega_{m_k}} \\
        \pa_n  \phi_{\bfm,\dot{\bfm}}  & = \big\langle V_{m_k}(\cdot) , \dot{m}_k \big\rangle, \quad \text{ on } \pa \Omega_{m_k} \quad \forall k. 
    \end{aligned}
    \right.
\end{equation}
We recall that the normal vector $n$ points outward $\Omega_{m_k}$. We collect some properties of $\phi_{\bfm,\dot{\bfm}}$ in the next two lemmas. 
\begin{lemma} \label{lemma_phi_m}
For any $(\bfm , \dot{\bfm}) \in T\bfM$, there exists a unique variational solution $\phi_{\bfm,\dot{\bfm}}$ of \eqref{system_phi_m} such that 
$$\na \phi_{\bfm,\dot{\bfm}} \in L^2(\R^3 \setminus \overline{\cup_{i=1}^N \Omega_{m_i}}), \quad \phi_{\bfm,\dot{\bfm}} \in  L^6(\R^3 \setminus \overline{\cup_{i=1}^N \Omega_{m_i}}).$$
Moreover, if the following non-degeneracy condition is satisfied: 
\begin{equation} \label{nondegeneracy}
\forall (m,\dot{m}) \in TM, \quad \dot{m} \neq 0, \quad  \exists x \in \mathcal{S}_m =  \pa \Omega_m, \: \big\langle V_m(x) , \dot{m} \big\rangle \neq 0  
\end{equation}
then, for any $\bfm \in \bfM$ and any basis $(\mathbf{e}_1, \dots \mathbf{e}_p)$  of $T_{\bfm} \bfM$,  with $p = \dim \bfM = N \dim M$, the family 
$\big(\phi_{\bfm,\mathbf{e}_1}$, \dots, $\phi_{\bfm,\mathbf{e}_p}\big)$  is free.  
\end{lemma}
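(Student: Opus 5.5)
Existence and uniqueness will follow from the Lax–Milgram theorem on the homogeneous space
$$\dot H^1(F) := \{\phi \in L^6(F),\ \na\phi \in L^2(F)\}, \qquad F := \R^3 \setminus \overline{\cup_{i=1}^N \Omega_{m_i}},$$
with the norm $\|\na \phi\|_{L^2(F)}$. That this is a norm, and that $\dot H^1(F)$ is a Hilbert space, follows from the Sobolev inequality $\|\phi\|_{L^6}\le C\|\na\phi\|_{L^2}$ on the exterior domain $F$. To obtain it, I would extend $\phi$ inside each bounded smooth $\Omega_{m_i}$ by a bounded extension operator. Such an operator exists because the closures are disjoint, and it can be chosen to control only the gradient: apply the extension to $\phi - c$ on a bounded annular neighborhood, with $c$ fixed by Poincaré–Wirtinger there. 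Then the Sobolev inequality on $\R^3$ applies.

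The variational formulation reads
$$\int_F \na\phi\cdot\na\psi = \sum_k \int_{\pa\Omega_{m_k}} g_k\,\psi \,\mathrm{d}\sigma \quad \text{for all } \psi\in \dot H^1(F), \qquad g_k := \big\langle V_{m_k}(\cdot),\dot m_k\big\rangle .$$
Here the sign is dictated by the outward normal of $F$, which is $-n$ on $\pa\Omega_{m_k}$. Each $g_k$ is smooth on $\pa\Omega_{m_k}$, since $V$ is smooth on the smooth manifold $\bigsqcup_m \mathcal{S}_m$.

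The right-hand side is a continuous linear form on $\dot H^1(F)$. The trace of $\psi$ on $\pa\Omega_{m_k}$ is bounded in $L^2$ by $\|\psi\|_{H^1(U)}$ for a bounded neighborhood $U$ of $\pa\Omega_{m_k}$ in $F$. In turn, $\|\psi\|_{L^2(U)}\le C\|\psi\|_{L^6}\le C\|\na\psi\|_{L^2}$ by Hölder's inequality. Coercivity of the form is immediate, so Lax–Milgram yields a unique $\phi_{\bfm,\dot{\bfm}}$, and this $\phi$ depends linearly on $\dot{\bfm}$. Note that no compatibility condition on $\int g_k$ is needed: constants are excluded from $\dot H^1(F)$ by the $L^6$ requirement, which is exactly why the exterior domain is used.

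For freeness, I use linearity in $\dot{\bfm}$. If $\sum_j \lambda_j \phi_{\bfm,\mathbf e_j}=0$, then $\phi_{\bfm,\dot{\bfm}}=0$ with $\dot{\bfm}=\sum_j\lambda_j\mathbf e_j$, and it suffices to show that $\dot{\bfm}=0$. Testing the variational formulation against arbitrary $\psi$ gives $\sum_k\int_{\pa\Omega_{m_k}} g_k\psi\,\mathrm{d}\sigma=0$. Choosing $\psi$ as smooth compactly supported functions localized near a single $\pa\Omega_{m_k}$, which is possible since the closures are disjoint, shows that $g_k=0$ on $\pa\Omega_{m_k}$ for each $k$. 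Equivalently, and more directly, elliptic regularity makes $\phi$ smooth up to the boundary, so $\pa_n\phi = g_k$ pointwise, and this vanishes since $\phi\equiv0$. Thus $\langle V_{m_k}(x),\dot m_k\rangle=0$ for all $x\in\mathcal S_{m_k}$. The non-degeneracy condition \eqref{nondegeneracy} then forces $\dot m_k=0$ for every $k$, hence $\dot{\bfm}=0$, and since $(\mathbf e_j)$ is a basis, all $\lambda_j=0$.

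The only step requiring care is the functional framework on the exterior domain: the gradient-controlled extension and the Sobolev inequality, which together justify both coercivity and the continuity of the boundary functional. I expect this to be routine but to be the main technical point; the rest is linear algebra.
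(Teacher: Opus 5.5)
Your proof is correct, but for existence and uniqueness it takes a different route from the paper; the freeness part is the same (linearity in $\dot{\bfm}$, vanishing Neumann data, then the non-degeneracy condition).

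\textbf{The paper's route.} It first subtracts explicit monopoles (multiples of $|x-x_k|^{-1}$ centred at points $x_k$ inside each $\Omega_{m_k}$), so that the Neumann data have zero integral on every $\pa\Omega_{m_k}$. It then applies Lax--Milgram in a space of functions with $L^2$ gradient, normalized by a zero mean on the bounded region $\Omega' = B\setminus\overline{\cup_k\Omega_{m_k}}$, where only the Poincar\'e inequality is needed. The zero-flux property is what lets the test functions be shifted by arbitrary constants, so that the full Neumann condition is recovered. The $L^6$ membership is obtained only at the end, from Galdi's theorem giving a limit constant $\psi_\infty$ at infinity.

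\textbf{Your route and what each buys.} You work directly in $\{\phi\in L^6(F),\ \na\phi\in L^2(F)\}$. There the exterior Sobolev inequality, via your gradient-controlled extension, gives three things at once: the Hilbert structure, continuity of the boundary functional, and the exclusion of constants. You therefore need no flux correction, no compatibility condition and no decay theorem, and uniqueness in exactly the class stated in the lemma follows from Lax--Milgram. The price is that your framework depends on the exterior three-dimensional setting. In the bounded cavity of Paragraph \ref{sec_bounded_cavity}, constants lie in your space, so you would have to revert to a zero-mean normalization together with the compatibility condition \eqref{constraint_cavity}. That is precisely the situation the paper's construction adapts to with little change.

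\textbf{A sign slip.} Since $n$ points out of $\Omega_{m_k}$, hence into $F$, Green's formula gives
\[
\int_F \na\phi\cdot\na\psi = -\sum_k\int_{\pa\Omega_{m_k}} g_k\,\psi\,\mathrm{d}\sigma ,
\]
which is the sign the paper uses. With your $+$ sign, the solution you construct satisfies $\pa_n\phi=-g_k$ rather than $g_k$. Flipping the sign fixes this, and it has no effect on coercivity, continuity or the freeness argument.
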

\begin{proof}
To lighten notations, we write $\phi$ instead of $\phi_{\bfm,\dot{\bfm}}$, $\: \Omega_k$ instead of $\Omega_{m_k}$ and $\:a_k = \big\langle V_{m_k}(\cdot) , \dot{m}_k \big\rangle$. Hence, we need to solve the problem 
\begin{equation} \label{system_phi}
\left\{
\begin{aligned}
\Delta \phi & = 0 \quad \text{ in } \: \R^3 \setminus \overline{\cup_{k} \Omega_k}, \\
\pa_n \phi & = a_k \quad \text{ on } \: \pa \Omega_k, \quad \forall k
\end{aligned}
\right.
\end{equation}
Let $(x_1, r_1)$, \dots, $(x_N, r_N)$ such that for all $k=1,\dots,N$, $\overline{B(x_k,r_k)} \subset \Omega_k$. We define for all $k$ and all $x \neq x_k$,  $\phi_k(x) = -\frac{r_k^2}{4\pi|x-x_k|}$. It is easily checked that $\phi_k$ is a harmonic function in $\R^3\setminus \{x_k\}$, with 
$$ \int_{\pa \Omega_k} \pa_n \phi_k = \int_{\pa B(x_k,r_k)} \pa_n \phi_k = 1, \quad \text{ while } \: \int_{\pa \Omega_j} \pa_n \phi_k = 0 \quad \forall j \neq k.   $$
We then set $\psi = \phi - \sum_{k=1}^N \big( \int_{\pa \Omega_k} a_k  \big) \phi_k$, so that the system on $\phi$ is equivalent to 
\begin{equation} \label{system_psi}
\left\{
\begin{aligned}
\Delta \psi & = 0 \quad \text{ in } \: \R^3 \setminus \overline{\cup_{k} \Omega_k}, \\
\pa_n \psi & = a'_k = a_k - \sum_{j=1}^N \big( \int_{\pa \Omega_j} a_j  \big) \pa_n \phi_j \quad \text{ on } \: \pa \Omega_k, \quad \forall k
\end{aligned}
\right.
\end{equation}
Let $B$ an open ball containing $\cup_k \overline{\Omega_k}$ and $\Omega' = B \setminus \overline{\cup_k \Omega_k}$. Let 
$$ V = \Big\{ \varphi \in L^2(\Omega') \cap L^2_{loc}(\R^3 \setminus \overline{\cup_k \Omega_k}), \na \varphi \in L^2(\R^3 \setminus \overline{\cup_k \Omega_k}), \quad \int_{\Omega'} \varphi = 0 \Big\}$$
Thanks to the Poincaré inequality: $\|f\|_{L^2(\Omega')} \le C \|\na f\|_{L^2(\Omega')}$ for all $f$ s.t. $\int_{\Omega'} f =0$, $V$ is a Hilbert space equipped with 
$\| \varphi\| = \|\na \varphi\|_{L^2(\R^3 \setminus \overline{\cup_k \Omega_k})}$. Moreover, combining this Poincaré inequality and the trace inequality  $\|f\|_{H^{1/2}(\pa \Omega_k)} \le C \|f\|_{H^1(\Omega')}$, it is easily seen that the variational formulation: for all $\varphi \in V$ 
$$ \int_{\R^3 \setminus \overline{\cup \Omega_k}} \na \psi \cdot \na \varphi = - \sum_k \int_{\pa \Omega_k} a'_k \varphi,   $$
has a unique solution $\psi \in V$. But as $\int_{\pa \Omega_k} a'_k = 0$ for all $k$, this equality remains true replacing $\varphi$ by $\varphi - C$ for any constant $C$. In other words, the equality remains true for all $\varphi$ that do not satisfy the zero mean condition $\int_{\Omega'} \varphi = 0$. Hence, the solution $\psi \in V$ satisfies \eqref{system_psi}. Moreover, it is well-known that there exists a constant $\psi_\infty$ such that $\psi - \psi_\infty \in L^6(\R^3\setminus \cup_k \Omega_k)$, see for instance \cite[Theorem II.6.1]{Galdi}. Back to the original problem, $\phi = \psi - \psi_\infty + \sum_{k=1}^N \big( \int_{\pa \Omega_k} a_k \cdot n \big) \phi_k$ is the unique solution of \eqref{system_phi} that satisfies the requirements of the lemma. 

\medskip
For the second part of the lemma, we introduce real numbers $\lambda_1, \dots, \lambda_p$ such that 
$$\sum_{i=1}^p \lambda_i \phi_{\bfm,\mathbf{e}_i} = 0 $$
This means that $\phi_{\bfm, \dot{\bfm}} = 0$ where $\dot{\bfm} = \sum_{i=1}^p \lambda_i \mathbf{e}_i$. This implies that the corresponding Neumann data are zero : for all $k$, for all $x \in \pa \Omega_{m_k}$, $\big\langle V_{m_k}(x) , \dot{m}_k \big\rangle = 0$. By the non-degeneracy condition \eqref{nondegeneracy}, this implies that $\dot{\bfm} = 0$, hence that all $\lambda_i$ are zero.     
\end{proof}

\begin{lemma} \label{lemma_regularity_phi_m}
Let $\mathbf{E}$ a smooth vector field on $\bfM$.  The map  
$$ \bigcup_{\bfm \in \bfM}  \{ \bfm\} \times \Big( \R^3 \setminus \overline{\cup_{k=1}^n \Omega_{m_k}} \Big) \rightarrow \R, \quad  (\bfm,x) \rightarrow \phi_{\bfm, \mathbf{E}(\bfm)}(x) $$
is smooth. Moreover, for any smooth map $\tau \rightarrow \bfm(\tau)$ in $\bfM$, one has for all  $\alpha \in  \mathbb{N}^3$ with $|\alpha| \ge 1$:  
$$ \pa_{\tau} \pa^\alpha_x \phi_{\bfm(\tau), \mathbf{E}(\bfm(\tau))}  \in L^2\left(\R^3 \setminus \overline{\cup_k \Omega_{m_k}}\right).$$
\end{lemma}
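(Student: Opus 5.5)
The plan is to reduce the moving-domain problem to a fixed reference domain through a smooth family of diffeomorphisms, and then apply the implicit function theorem (or a direct differentiation argument) in a fixed Hilbert space.

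Fix $\bfm_0 \in \bfM$. Since $\bigsqcup_m \mathcal{S}_m$ is a smooth submanifold and the $\overline{\Omega_{m_k}}$ are disjoint, we can construct, for $\bfm$ in a neighborhood $U$ of $\bfm_0$, a family of diffeomorphisms $\Psi_{\bfm} : \R^3 \to \R^3$ with the following properties:
\begin{itemize}
\item $\bfm \mapsto \Psi_{\bfm}$ is smooth;
\item $\Psi_{\bfm_0} = \mathrm{Id}$;
\item $\Psi_{\bfm}$ maps $\Omega_{m_{0,k}}$ onto $\Omega_{m_k}$ for each $k$;
\item $\Psi_{\bfm}$ equals the identity outside a fixed ball $B$ and outside small disjoint tubular neighborhoods of the $\mathcal{S}_{m_{0,k}}$.
\end{itemize}
Concretely, we use the normal-graph description $\mathcal{S}_{\tilde m} = \{ y + \rho(\tilde m, y) n(y) \}$ recalled in Section~\ref{sec_formalism}, and extend $y + s n(y) \mapsto y + (s + \chi(s)\rho(\tilde m, y)) n(y)$ with a cutoff $\chi$ equal to $1$ near $0$.

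Set $F_0 = \R^3 \setminus \overline{\cup_k \Omega_{m_{0,k}}}$ and $\tilde\phi_{\bfm} = \phi_{\bfm, \mathbf{E}(\bfm)} \circ \Psi_{\bfm}$ on $F_0$. The pulled-back problem reads
\[
-\operatorname{div}\big(A_{\bfm} \nabla \tilde\phi_{\bfm}\big) = 0 \text{ in } F_0, \qquad A_{\bfm} \nabla \tilde\phi_{\bfm} \cdot n = g_{\bfm} \text{ on } \pa \Omega_{m_{0,k}}.
\]
Here $A_{\bfm} = \det(D\Psi_{\bfm})\, D\Psi_{\bfm}^{-1} D\Psi_{\bfm}^{-T}$ depends smoothly on $\bfm$, is uniformly elliptic, and equals $I$ outside $B$. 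The boundary data are
\[
g_{\bfm} = \big\langle V_{m_k}(\Psi_{\bfm}(\cdot)), \mathbf{E}_k(\bfm) \big\rangle \, \| \operatorname{Cof}(D\Psi_{\bfm}) n \|,
\]
which are smooth in $(\bfm, x)$ because $V$ is smooth on the manifold $\bigsqcup \mathcal{S}_m$ (it is given by $\pa_m \rho$).

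As in Lemma~\ref{lemma_phi_m}, we subtract $\sum_k \big(\int g_{\bfm,k}\big)\phi_k$, where the point sources $x_k$ are fixed inside $\Omega_{m_{0,k}}$ and their pushforwards $\phi_k$ are smooth in $\bfm$. We then solve in the fixed Hilbert space $V$ of Lemma~\ref{lemma_phi_m} (built on $F_0$). The bilinear forms $a_{\bfm}(\psi, \varphi) = \int A_{\bfm} \nabla\psi \cdot \nabla\varphi$ are uniformly coercive, and they together with the right-hand sides depend smoothly on $\bfm$ in operator norm. Hence $\bfm \mapsto \tilde\psi_{\bfm} \in V$ is smooth, by smoothness of inversion of isomorphisms. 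Differentiating the equation in $\bfm$ shows that each derivative $\pa_{\bfm}^\beta \tilde\psi_{\bfm}$ solves the same type of problem, with sources involving lower derivatives multiplied by $\pa^\beta A_{\bfm}$. These sources are supported in $B$.

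Next comes regularity in $x$. Elliptic regularity up to the smooth boundaries $\pa\Omega_{m_{0,k}}$, with smooth coefficients and data, gives $\pa_{\bfm}^\beta \tilde\psi_{\bfm} \in H^s(F_0 \cap B')$ for all $s$, continuously in $\bfm$. Outside $B$, $A_{\bfm} = I$, so all these functions are harmonic and interior estimates give smoothness. Composing with $\Psi_{\bfm}^{-1}$, which is smooth in $(\bfm, x)$, yields joint smoothness of $(\bfm, x) \mapsto \phi_{\bfm, \mathbf{E}(\bfm)}(x)$, by Sobolev embedding locally.

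For the $L^2$ statement, take a smooth path $\bfm(\tau)$. Then $\pa_\tau \phi_{\bfm(\tau)}$ is harmonic in the exterior of $\overline{\cup_k \Omega_{m_k(\tau)}}$: away from the moving boundaries one may differentiate $\Delta \phi = 0$ in $\tau$. Its gradient is in $L^2$ outside $B$, because there it coincides with $\nabla \pa_\tau$ of the pulled-back function, which lies in the fixed space $V$ modulo the explicit $\phi_k$ terms. Those terms have $\tau$-derivatives of order $|x|^{-2}$, which are in $L^2$ at infinity. Near the boundary everything is smooth and bounded. Higher derivatives $\pa^\alpha_x$ with $|\alpha| \ge 1$ of a harmonic function with $L^2$ gradient near infinity decay like $|x|^{-1-|\alpha|}$, via the multipole expansion or \cite[Theorem II.6.1]{Galdi}-type estimates, hence are in $L^2$.

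The main obstacle I expect is the careful construction of the smooth family $\Psi_{\bfm}$ from the abstract manifold hypothesis on $\bigsqcup_m \mathcal{S}_m$, and checking that $g_{\bfm}$ is smooth in $\bfm$. The rest is standard perturbation theory for coercive forms.
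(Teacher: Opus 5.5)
Your proposal is correct and is essentially the paper's argument, made more explicit: transport to a fixed reference domain, smooth dependence via inversion of a uniformly coercive operator (the paper uses the implicit function theorem), then elliptic regularity. The differences are that you use a multi-parameter family $\Psi_{\bfm}$ rather than one-parameter paths plus local coordinates, and you get the $L^2$ claim from $\Psi_{\bfm}=\mathrm{Id}$ outside $B$ together with decay of exterior harmonic functions, rather than from the Neumann problem solved by the shape derivative $\pa_\tau \phi$. Two small points to tidy up: your solution in $V$ is normalized by zero mean on $\Omega'$ rather than by $\phi \in L^6$, so note that the constant at infinity equals the mean of $\tilde\psi_{\bfm}$ over a fixed sphere outside $B$ (the net flux being zero), hence is smooth in $\bfm$; and for $\tilde m \neq m_0$ one has $V_{\tilde m}(x) = \big(n(y)\cdot n_{\tilde m}(x)\big)\, \pa_m\rho(\tilde m,y)$ rather than just $\pa_m\rho$, which is still smooth.
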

\begin{proof}
 The key difficulty is that the domain of definition of $\phi_{\bfm,\mathbf{E}(\bfm)}$ depends on $\bfm$. This problem of regularity with respect to the domain was analyzed in details in \cite{Henrot}, especially for solutions of a  Neumann problem in a bounded domain:  see \cite[chapter 5.5]{Henrot}. In our system \eqref{system_phi_m}, we have  an extra dependence on $\bfm$ in the  boundary data, and the domain is not bounded but of the form $\R^3 \setminus \cup_k \Omega_{m_k}$. Still,  adapting the approach of \cite{Henrot} to these changes is straightforward. Therefore,  we just remind here the big steps of this approach. Let $\tau \rightarrow \bfm(\tau)$ a smooth path in $\bfM$ defined for $\tau$ in a neighborhood of zero.  First, one builds over the fluid domain $\R^3 \setminus \cup_k \Omega_{m_k(\tau)}$ some velocity field  $v(\tau, \cdot)$ such that $v(\tau, \cdot)\vert_{\pa \Omega_{m_k(\tau)}} = \big\langle V_{m_k(\tau)} , m'_k(\tau) \big\rangle n$. In other words, we extend to the whole fluid domain  the (normal) velocity of the interface $\cup_{k} \pa \Omega_{m_k(\tau)}$. Denoting $\Theta_\tau : \R^3 \setminus \cup_k \Omega_{m_k(0)} \rightarrow \R^3 \setminus \cup_k \Omega_{m_k(\tau)}$ the associated flow map, one can through the change of variable $x = \Theta_\tau(y)$ turn the Neumann problem satisfied by 
 $\phi_{\bfm(\tau), \mathbf{E}(\bfm(\tau))}$ into a similar problem on a fixed domain, where the Laplacian is replaced by an elliptic operator with variable coefficients depending smoothly on $\tau$.  Smoothness with respect to $y$ follows from ellipticity, while smoothness with respect to $\tau$ can then be established through an implicit function theorem. See \cite[Theorem 5.5.1]{Henrot} for details.  Back to the original moving domain, this provides smoothness of the map $(\tau,x) \rightarrow \phi_{\bfm(\tau), \mathbf{E}(\bfm(\tau))}(x)$ for $x$ away from the boundary. Moreover, differentiating the elliptic equations in the fixed domain with respect to  $\tau$, and translating into the original moving domain, it shows that $\pa_\tau \phi_{\bfm(\tau), \mathbf{E}(\bfm(\tau))}$ satisfies itself a new Neumann problem, with data involving   $\phi_{\bfm(\tau), \mathbf{E}(\bfm(\tau))}$. See \cite[Theorem 5.5.2]{Henrot}. In particular, we find that for all $\alpha \in  \mathbb{N}^3$ with $|\alpha| \ge 1$:  
 $$ \pa_\tau \pa^\alpha_x \phi_{\bfm(\tau), \mathbf{E}(\bfm(\tau))} \in L^2(\R^3 \setminus \overline{\cup_k \Omega_{m_k(\tau)}}) $$
 (the case $|\alpha|=1$ is directly given by the variational formulation of the new Neumann problem, the case $|\alpha| \ge 1$ is deduced from  elliptic regularity). As the path $\tau \rightarrow \bfm(\tau)$ is arbitrary, we can take local coordinates $\tau_1, \dots, \tau_p \in \R^p$ parametrizing $\bfM$ and consider the maps $\tau_i \rightarrow \bfm(\tau_1, \dots, \tau_i, \dots,\tau_p)$. We recover in this way all partial derivatives of first order. 
 Reiterating the reasoning with the partial derivatives (which as we saw solve similar Neumann problem), we obtain regularity in $\bfm$ at any order.  
\end{proof}

\subsection{ODE system for curl-free flows} \label{sec_ODEs}
Let $\bfm_0  \in \bfM$. For $V$ a small enough neighborhood of  $\bfm_0$, there exist  smooth vector fields $\mathbf{E}_1$, \dots, $\mathbf{E}_p$ and smooth one-forms $\mathbf{E}_1^*, \dots, \mathbf{E}_p^*$ defined on $V$ such that: for each $\bfm \in V$,  
\begin{equation} \label{basis_dual_basis}
\big(\mathbf{E}_1(\bfm), \dots, \mathbf{E}_p(\bfm)\big) \: \text{ is a basis of } \: T_{\bfm} \bfM, \quad  \text{ and for all $i,j$}, \: \big\langle \mathbf{E}_i^*(\bfm) , \mathbf{E}_j(\bfm) \rangle = \delta_{ij} 
\end{equation}
Any vector $\dot{\bfm} \in T_{\bfm}\bfM$ is decomposed in a unique way in this basis: 
$$\dot{\bfm} = \sum_{i=1}^p \big\langle \mathbf{E}_i^*(\bfm) , \dot{\bfm} \rangle  \mathbf{E}_i(\bfm).$$ 
Let now $\bfm = \bfm(t) = \big(m_1(t), \dots, m_N(t)\big)$ and $u = u(t,x)$  satisfying \eqref{system_3} with $\bfm(0) = \bfm_0$. As  $u$ is curl-free it can be written as $u(t,x) = \nabla_x \phi(t,x)$ and as it is divergence-free 
$$\Delta_x \phi(t,\cdot) = 0, \quad \text{in } \:   \R^3 \setminus \overline{\cup_{k=1}^N \Omega_{m_k(t)}}.$$ 
Moreover, the boundary condition on the normal velocity reads 
$$ \pa_n \phi(t, \cdot) = \big\langle V_{m_k(t)}(\cdot) , m'_k(t) \big\rangle, \quad \text{ on } \pa \Omega_{m_k(t)} \quad \forall k, \forall t.  $$
It follows, with the notations of Paragraph \ref{sec_shape_dependent}, that
$$ \forall t, \quad u(t, \cdot) = \nabla \phi_{\bfm(t), \bfm'(t)}. $$
For $t$ small enough, by continuity, $\bfm(t) \in V$. By Lemma \ref{lemma_phi_m}, under the non-degeneracy condition \eqref{nondegeneracy}, one can decompose in a unique way $\phi_{\bfm(t), \bfm'(t)}$ as 
 $$ \phi_{\bfm(t), \bfm'(t)} = \sum_{i=1}^p \big\langle \mathbf{E}_i^*(\bfm(t)) , \bfm'(t) \big\rangle  \, \phi^i_{\bfm(t)}, \quad \text{where for all $\bfm$ in $V$, } \phi^i_{\bfm} = \phi_{\bfm,\mathbf{E}_i(\bfm)}.$$
Finally, for all $t$, 
\begin{equation} \label{decomposition_u}
u(t,\cdot) =  \sum_{i=1}^p \big\langle \mathbf{E}_i^*(\bfm(t)) , \bfm'(t) \big\rangle  \, \na \phi^i_{\bfm(t)}
\end{equation}
and the only unknown left is the function $t \rightarrow \bfm(t)$ encoding the dynamics of the bubbles. To obtain this dynamics, 
we will perform computations that have strong similarities with those leading to the added mass effect for bidimensional irrotational inviscid flows, such as described in \cite{HouotMunnier,GlassSueurTakahashi2012,GlassLacaveMunnier}. Let us come back to the last boundary condition of \eqref{system_3}, where  we recall that $p_k = p_k(t)$ is the homogeneous pressure in the $k$-th bubble. Note that it depends on $t$ only through the volume of $\Omega_{m_k(t)}$, in particular only through $m_k(t)$: $p_k(t) = P_k(m_k(t))$.  For each $\bfm = (m_1, \dots, m_k) \in \bfM$, for all $x \in \cup_k \pa \Omega_{m_k}$, we define the linear form  $V_{\bfm}(x)$ by:  
$$ \forall \: \dot{\bfm} = \big( \dot{m}_1, \dots , \dot{m}_N \big), \quad    \big\langle V_{\bfm}(x) , \dot{\bfm} \big\rangle = \big\langle V_{m_k}(x) , \dot{m}_k \big\rangle \quad \text{ if } x \in \pa \Omega_{m_k}.  $$
With this notation, it is easily seen that the last equality in \eqref{system_3} can be written 
$$ \int_{\cup_k \pa \Omega_{m_k(t)}} \Big( p(t,\cdot) - \sum_{k} 1_{\pa \Omega_{m_k(t)}} p_k(t) \Big) V_{\bfm(t)}(\cdot) d\sigma = 0   $$
or for $t$ small enough, for all $i=1,\dots,p$, 
$$ \int_{\cup_k \pa \Omega_{m_k(t)}} \Big( p(t,\cdot) - \sum_{k} 1_{\pa \Omega_{m_k(t)}} p_k(t) \Big) \big\langle V_{\bfm(t)}(\cdot) , \mathbf{E}_i(\bfm(t)) \big\rangle d\sigma = 0.   $$
We rewrite it as: for all $i=1,\dots,p$, 
$$ \int_{\cup_k \pa \Omega_{m_k(t)}} \Big( p(t,\cdot) - p_\infty \Big)  \big\langle V_{\bfm(t)}(\cdot) , \mathbf{E}_i(\bfm(t)) \big\rangle d\sigma = F_i(\bfm(t))   $$
where 
$$F_i(\bfm) =  \int_{\cup_k \pa \Omega_{m_k}}  \left( \sum_{k} 1_{\pa \Omega_{m_k}} P_k(m_k) - p_\infty  \right)  \big\langle V_{\bfm}(\cdot) , E_i(\bfm) \big\rangle d\sigma. $$
We can then reformulate  the left-hand side: 
\begin{align*}
& \int_{\cup_k \pa \Omega_{m_k(t)}} \big( p(t,\cdot) - p_\infty \big) \,  \big\langle V_{\bfm(t)}(\cdot) , \mathbf{E}_i(\bfm(t)) \big\rangle d\sigma \\
= & \int_{\cup_k \pa \Omega_{m_k(t)}} \big( p(t,\cdot) - p_\infty \big) \, \pa_n \phi^i_{\bfm(t)}  d\sigma  =  - \int_{\R^3 \setminus \overline{\cup_k \Omega_{m_k(t)}}}  \na p \cdot \na \phi^i_{\bfm(t)} \\
= &  \int_{\R^3 \setminus \overline{\cup_k \Omega_{m_k(t)}}} \left( \pa_t u + u \cdot \na u \right) \cdot \na \phi^i_{\bfm(t)} \\
\end{align*}
The idea is  then to replace $u$ by its expression in \eqref{decomposition_u}. Differentiation with respect to time and space is possible, thanks to the regularity of the vector fields $\mathbf{E}_i$ and the dual one-forms $\mathbf{E}_i^*$, and thanks to the regularity results of Lemma \ref{lemma_regularity_phi_m}. Denoting $\dot{\bfm}_j(t) = \big\langle \mathbf{E}_j^*(\bfm(t)) , \bfm'(t) \big\rangle \in \R$, so that 
$$ u(t,\cdot) = \sum_{j=1}^p \dot{\bfm}_j(t) \na \phi^j_{\bfm(t)} $$
we end up with a system of the form:  for all $i=1,\dots,p$,
\begin{align*}
    \sum_{j=1}^p  \dot{\bfm}_j'(t) \int_{\R^3 \setminus \cup_k \Omega_{m_k(t)}} \na \phi^j_{\bfm(t)} \cdot \na \phi^i_{\bfm(t)} =   G_i(\bfm(t), \bfm'(t)) 
\end{align*}
for some smooth $G_i$. The Gram matrix $\left( \int_{\R^3 \setminus \cup_k \Omega_{m_k}} \na \phi^j_{\bfm} \cdot \na \phi^i_{\bfm} \right)_{1 \le i,j \le p}$ is invertible because $\big(\phi^i_{\bfm}\big)_{1 \le i \le p}$ is free, and so we get: for all $i=1,\dots,p$ 
$$  \dot{\bfm}_i'(t) = \tilde{G}_i(\bfm(t),\bfm'(t)) $$
for some smooth $\tilde{G}_i$. But differentiating the relation $\bfm'(t) = \sum_i \dot{\bfm}_i(t) \mathbf{E}_i(\bfm(t))$, we find that 
$$ \bfm''(t) = \sum_i \dot{\bfm}'_i(t) \mathbf{E}_i(\bfm(t)) +  \na_{\bfm'(t)} \mathbf{E}_i(\bfm(t))   $$
where for two vector fields $\mathbf{X},\mathbf{Y}$ 
 on $\bfM$,  $\na_{\mathbf{X}} \mathbf{Y}$ is the standard covariant derivative inherited from the one of $\R^{dN}$ (remember that $M$ is a submanifold of $\R^d$, so that $\bfM$ is a submanifold of $\R^{dN}$). 
So, the system can be reformulated as the second order system
$$ \bfm''(t) = \mathcal{G}(\bfm(t),\bfm'(t)) $$
for some smooth $\mathcal{G}$ on $T \bfM$. Starting from any $(\bfm_0, \dot{\bfm}_0) \in T \bfM$, it has a local in time smooth solution $t \rightarrow \bfm(t)$, which in turn provides a local in time smooth solution of \eqref{system_3}, defining $u$ through \eqref{decomposition_u}. 

\subsection{Example of one spherical bubble}
In this paragraph, we consider the important example of one spherical bubble in a curl-free flow: 
\begin{align*}
N = 1, \quad m(t) = \big(c(t), r(t)\big),  \quad \Omega_{m(t)} = B(t) = B(c(t), r(t)), \quad \big\langle V_{m}, \dot{m} \big\rangle = \dot{c} \cdot n + \dot{r}.
\end{align*}
In such a case, the derivation of the ODE system seen in the previous paragraph can be slightly shortened. First, in view of \eqref{eq_sphere_simple},  system \eqref{system_3} reads:
\begin{equation} 
\left\{
\begin{array}
[c]{l}%
\partial_{t}u + u\cdot\nabla u +\nabla p =0\text{ in
} \R^3 \setminus B(t),\\
\textrm{curl } u = 0, \quad \operatorname{div} u =0\text{ in
} \R^3 \setminus B(t),\\
u(t,\cdot)  \cdot n = c'(t) \cdot n + r'(t) \text{ on } \pa B(t),  \\
\displaystyle\int_{ \pa B(t)}
p\left(  t,x\right)  n\left(  t,x\right)  \mathrm{d}\sigma=0\\
\displaystyle\int_{ \pa B(t)}
p\left(t,x\right)  \mathrm{d}\sigma = 4\pi r(t)^{2}
p_B\left(t \right) 
\end{array}
\right.  \label{eq_sphere_curl_free}%
\end{equation}
where $p_B(t) = p_-\left( \frac{3M}{4\pi r(t)^3} \right)$ is the pressure of the bubble.
We introduce the potential $\phi = \phi(t,x)$ such that $u = \na_x \phi$, solving 
$$ \Delta_x \phi(t, \cdot) = 0 \quad \text{ in } \: \R^3\setminus B(t), \quad \pa_n \phi\vert_{\pa B(t)} = c'(t) \cdot n + r'(t) $$
The solution is  given by 
 $$\phi(t,x) = - \frac{r(t)^2 r'(t)}{|x-c(t)|} - \frac{r(t)^3}{2 |x-c(t)|^3} c'(t) \cdot (x - c(t)).  $$
To determine the dynamics of the center of mass $c(t)$ and radius $r(t)$ of the bubble, it is convenient to write directly  the Bernoulli equation 
$$ \pa_t \phi + \frac{1}{2}|\na \phi|^2 + p - p_\infty = 0$$
and to evaluate it at the boundary of the bubble. Straightforward but tedious computations yield: 
\begin{align*} 
\pa_t \phi\vert_{\pa B(t)}  & = - r(t) r''(t) -  2r'(t)^2 - \frac52 r'(t) (c'(t) \cdot n) - \frac32 (c'(t) \cdot n)^2 - \frac12 r(t) c''(t) \cdot n + \frac{1}{2} |c'(t)|^2 \\
\na \phi\vert_{\pa B(t)}  & = r'(t) n + \frac32 (c'(t) \cdot n) n - \frac12 c'(t), \\ 
\frac12 |\na \phi\vert_{\pa B(t)}|^2  &= \frac12 \left( r'(t)^2 + \frac34 (c'(t) \cdot n)^2 + \frac{1}{4} |c'(t)|^2 + 2  (c'(t) \cdot n) r'(t) \right) 
\end{align*}
so that by Bernoulli equation we get
$$ - r(t) r''(t) - \frac32 r'(t)^2 - \frac{3}{2} r'(t) (c'(t)\cdot n) - \frac98 (c'(t) \cdot n)^2 - \frac12 r(t) c''(t) \cdot n + \frac58 |c'(t)|^2 + p\vert_{\pa B(t)} - p_\infty = 0 $$
Integrating over $\pa B(t)$, using the last relation in \eqref{eq_sphere_curl_free}, one finds (remember $\dashint_{\pa B} n \cdot n = \frac{1}{3}$): 
$$ - r(t) r''(t) - \frac32 r'(t)^2 + \frac14 |c'(t)|^2 + p_B(t) - p_\infty = 0 $$
Moreover, multiplying by the normal vector $n$, integrating over $\pa B(t)$ and using that $\int_{\pa B(t)} p(t,\cdot) n = 0$, we get 
$$ \frac32 \frac{r'(t)}{r(t)} c'(t) + c''(t) = 0. $$
Those two equations are the second order ODE's describing the dynamics of the bubble, that could be found previously in \cite{hermans1973}. If we neglect the translation of the bubble ($c=0$), we recover the famous {\em Rayleigh-Plesset equation}: 
$$ - r(t) r''(t) - \frac32 r'(t)^2 + p_B(t) - p_\infty = 0 $$

\subsection{Extension to a bounded cavity} \label{sec_bounded_cavity}
The well-posedness of \eqref{system_3} established in Paragraphs \ref{sec_shape_dependent}-\ref{sec_ODEs} was restricted to  fluid domains of the form $\R^3 \setminus \overline{\cup_k \Omega_{m_k(t)}}$, that is to fluids  filling the whole space. A natural problem is to extend this well-posedness result to the case of a bounded cavity $\Omega$. As discussed in Remark \ref{rem_compatibility condition} or at the beginning of Section \ref{sec_WP_reduced}, there should be  in this case extra compatibility conditions, due to volume constraints induced by incompressibility of the fluid. Indeed, coming back to the approach developped in  Paragraphs \ref{sec_shape_dependent}-\ref{sec_ODEs}, one issue stems from system \eqref{system_phi_m}, that should now be replaced by 
\begin{equation} \label{system_phi_m_bis}
\left\{
    \begin{aligned}
        -\Delta \phi_{\bfm,\dot{\bfm}} & = 0, \quad \text{ in } \Omega \setminus \overline{\cup_{k=1}^N \Omega_{m_k}} \\
        \pa_n  \phi_{\bfm,\dot{\bfm}}  & = \big\langle V_{m_k}(\cdot) , \dot{m}_k \big\rangle, \quad \text{ on } \pa \Omega_{m_k} \quad \forall k, \\
        \pa_n  \phi_{\bfm,\dot{\bfm}}  & = 0, \quad \text{ on } \pa \Omega.
    \end{aligned}
    \right.
\end{equation}
But integrating the first equation over the fluid domain, we find 
\begin{align*}
    0 = \int_{\Omega \setminus \overline{\cup_{k=1}^N \Omega_{m_k}}} \Delta \phi_{\bfm,\dot{\bfm}}  & = \int_{\pa \Omega} \pa_n \phi_{\bfm,\dot{\bfm}} - \sum_k \int_{\pa \Omega_{m_k}} \pa_n \phi_{\bfm,\dot{\bfm}} 
    = - \sum_k \int_{\pa \Omega_{m_k}} \big\langle V_{m_k} , \dot{m}_k \big\rangle 
\end{align*}
Hence, we find the necessary condition on $(\bfm, \dot{\bfm}) \in T \bfM$: 
\begin{equation} \label{constraint_cavity}
    \sum_k \int_{\pa \Omega_{m_k}} \big\langle V_{m_k} , \dot{m}_k \big\rangle  = 0 
\end{equation}
This condition echoes the one that the fluid velocity $u$ has to satisfy: for all $t$,  
\begin{equation} \label{constraint_cavity_2}
   \sum_k \int_{\pa \Omega_{m_k(t)}} \big\langle V_{m_k(t)} , m'_k(t) \big\rangle  = 0  
   \end{equation}
Hence, we have somehow to restrict the analysis to elements $(\bfm, \dot{\bfm}) \in T \bfM$ satisfying \eqref{constraint_cavity}. Note that for each $\bfm \in \bfM$, the map 
$$ \dot{\bfm} \rightarrow   \sum_k \int_{\pa \Omega_{m_k}} \big\langle V_{m_k} , \dot{m}_k \big\rangle$$
is a linear form on $T_\bfm \bfM$, so that either it is identically zero, or its kernel is a hyperplane of $T_\bfm \bfM$, that we may denote $F_\bfm$. In the latter case, with our notations, $\dim F_\bfm  = p-1$. If we are in the favorable (generic) situation where the second case holds for all $\bfm$, and where $\sqcup_{\bfm \in \bfM} \{\bfm\} \times F_\bfm$  is a smooth sub-bundle of the tangent bundle $T\bfM$, one can adapt the proof of well-posedness above. First, for any $\bfm, \dot{\bfm}$ satisfying \eqref{constraint_cavity}, one can easily modify the proof of Lemma \ref{lemma_phi_m} to show existence of a solution to \eqref{system_phi_m_bis}. Then, given $\big(\bfm_0, \dot{\bfm_0}\big)$ satisfying \eqref{constraint_cavity} (the initial data for the ODE system),   one can introduce as before in the vicinity of $\bfm_0$ a family of smooth vector fields  $\mathbf{E}_1$, \dots, $\mathbf{E}_p$ and a dual family of smooth one-forms $\mathbf{E}_1^*, \dots, \mathbf{E}_p^*$, such that \eqref{basis_dual_basis} holds, but this time with the extra requirement that 
$$ \forall \bfm, \quad \big(\mathbf{E}_1(\bfm), \dots, \mathbf{E}_{p-1}(\bfm)\big) \: \text{ is a basis of $F_\bfm$}. $$
One can further decompose 
$$ u(t,\cdot) =  \sum_{i=1}^{p-1} \dot{\bfm}_i(t)  \, \na \phi^i_{\bfm(t)}, \quad \text{where for all $\bfm$ in $V$, } \phi^i_{\bfm} = \phi_{\bfm,\mathbf{E}_i(\bfm)},$$ 
and proceeding exactly as in Paragraph \ref{sec_ODEs}, we derive a system of ODE's of the form: 
$$ \forall i=1\dots p-1, \quad \dot{\bfm}_i'(t) = \tilde{G}_i(\bfm(t), \bfm'(t))$$
It can be reformulated as a usual second order ODE system of the form $\bfm''(t) = \mathcal{G}(\bfm(t), \bfm'(t))$: indeed, one can write artificially $\bfm'(t) =  \sum_{i=1}^p \dot{\bfm}_i(t) \mathbf{E}_i(\bfm)$ and add the trivial equation 
$$\dot{\bfm}'_p(t) = 0.$$
It has therefore  a unique local in time solution starting from  $\big(\bfm_0, \dot{\bfm_0}\big)$. 

\medskip
As an example, let us consider the case of two spherical bubbles in a bounded cavity: 
$$ B_1(t) = B(c_1(t),r_1(t)) \quad \text{ and } \:  B_2(t) = B(c_2(t),r_2(t))  $$
\begin{align*}
& M = \big\{ m = (c,r), \quad c \in \R^3, \quad r > 0, \quad \overline{B(c,r)} \subset \Omega  \big\}, \\
 \text{ and } \: & \bfM = \big\{ (m_1, m_2) \in M^2, \: |c_1 - c_2| > r_1 + r_2 \big\}. 
 \end{align*}
From the computations of Paragraph  \ref{sec_spherical_bubbles}, we have that 
$$ \big\langle V_m(x), (\dot{c}, \dot{r}) \big\rangle = \dot{c} \cdot n(x) + \dot{r}$$
From there, using that $\int_{\pa \Omega_m} \dot{c} \cdot n = 0$, $\int_{\pa \Omega_m} \dot{r} = 4\pi r^2 \dot{r}$, we deduce that \eqref{constraint_cavity} reads 
\begin{equation*} 
    r_1^2 \dot{r}_1 + r_2^2 \dot{r}_2 = 0. 
\end{equation*}
while \eqref{constraint_cavity_2} yields 
\begin{equation} \label{relation_dotr1_dotr2} 
r_1(t)^2 r'_1(t) + r_2(t)^2 r'_2(t) = 0. 
\end{equation}
which implies that 
\begin{equation} \label{relation_r1_r2}
r_2(t)^3 = - r_1(t)^3 + r_{1,0}^3 + r_{2,0}^3 
\end{equation}
(where $r_{1,0}$ and $r_{2,0}$ are the initial radii of the two bubbles). 
Then, we can introduce for $k=1,2$ and $i=1\dots 3$ the solutions $\phi^{k,i}_\bfm$ of 
\begin{align*} 
& \Delta_x \phi^{k,i}_\bfm(t, \cdot) = 0 \quad \text{ in } \: \Omega\setminus \cup_{k'=1}^2 \Omega_{m_{k'}}, \\
& \pa_n \phi^{k,i}_\bfm\vert_{\pa \Omega_{k'}} = \delta_{k,k'} e_i \cdot n, \quad \pa_n \phi^{k,i}_\bfm\vert_{\pa \Omega} = 0, 
\end{align*}
and we complete it by the solution $\phi^r_\bfm$ of 
\begin{align*} 
& \Delta_x \phi^r_\bfm(t, \cdot) = 0 \quad \text{ in } \: \Omega\setminus \cup_{k'=1}^2 \Omega_{m_{k'}}, \\
& \pa_n \phi^r_\bfm\vert_{\pa \Omega_1} = 1, \quad \pa_n \phi^r_\bfm\vert_{\pa \Omega_2} = - \frac{r_1^2}{r_2^2}, \quad  \pa_n \phi^r_\bfm\vert_{\pa \Omega} = 0. 
\end{align*}
so that 
$$ u = \na \left(  \sum_{k=1}^2\sum_{i=1}^3   c'_{k,i}(t)  \phi^{k,i}_{\bfm(t)} + r'_1(t) \phi^r_{\bfm(t)} \right) $$
Eventually, one gets a second order ODE system of the form 
\begin{align*}
   & c_1'' = F_1(c_1,c_2,r_1,r_2,c_1', c_2', r_1',r_2'), \quad  c_2'' = F_2(c_1,c_2,r_1,r_2,c_1', c_2', r_1',r_2'), \\
   & r_1'' = F_r(c_1,c_2,r_1,r_2,c_1', c_2', r_1',r_2')
\end{align*}
which after replacing $r'_2$ and $r_2$ through \eqref{relation_dotr1_dotr2} and \eqref{relation_r1_r2} yields a closed system of ODEs on $c_1,c_2,r_1$. 

\section*{Acknowledgements}
Both authors acknowledge the support of Project ComplexFlows (ANR-23-EXMA-0004), and of the Project Complexcité of
Université Paris Cité. C.B. acknowledges the partial support by the Agence Nationale pour la Recherche grant CRISIS (ANR-20-CE40-0020-01). D.G.-V. acnowledges the support of the ANR Project Bourgeons, grant ANR- 23-CE40-0014-01, and of the ANR-DFG Project Suspensions, grant ANR-24-CE92-0028.

{\small
\bibliographystyle{apalike}
\bibliography{weak_solutions}
}
\end{document}